\title[Completion  Theorem]{The Completion  Theorem  in  Twisted  Equivariant  K-Theory for  proper  actions.}
\author{No\'e B\'arcenas }
\address{Centro de Ciencias Matem\'aticas. UNAM \\ Ap.Postal 61-3 Xangari. Morelia, Michoac\'an M\'EXICO 58089}
\email{barcenas@matmor.unam.mx}
\urladdr{http://www.matmor.unam.mx/~barcenas}
\author{Mario Vel\'asquez }
\address{Departamento de Matem\'aticas. \\Pontificia Universidad Javeriana\\Cra. 7 No. 43-82 - Edificio Carlos Ort\'iz 5to piso\\ Bogot\'a D.C, Colombia}
 \email{mavelasquezm@gmail.com}
 \urladdr{https://sites.google.com/site/mavelasquezm/}
\DeclareMathAlphabet\EuR{U}{eur}{m}{n}
\SetMathAlphabet\EuR{bold}{U}{eur}{b}{n}
\theoremstyle{plain}
\newtheorem{theorem}{Theorem}[section]
\newtheorem{lemma}[theorem]{Lemma}
\newtheorem{proposition}[theorem]{Proposition}
\newtheorem{corollary}[theorem]{Corollary}
\newtheorem*{theoremn}{Theorem}
\theoremstyle{definition}
\newtheorem{definition}[theorem]{Definition}
\newtheorem{remark}[theorem]{Remark}
\global\let\c@equation=\c@theorem}
\newcommand{\comsquare}[8]                   
{\begin{CD}
#1 @>#2>> #3\\
@V{#4}VV @V{#5}VV\\
#6 @>#7>> #8
\end{CD}
}
\newcommand{\xycomsquare}[8]                   
{\xymatrix
{#1 \ar[r]^{#2} \ar[d]^{#4} &
#3 \ar[d]^{#5}  \\
#6\ar[r]^{#7} &
#8
}
}
\newcommand{\calfin}{\mathcal{FIN}}
\newcommand{\calh}{\mathcal{H}}
\newcommand{\calu}{{\mathcal U}}
\newcommand{\IC}{{\mathbb C}}
\newcommand{\IK}{{\mathbb K}}
\newcommand{\IP}{{\mathbb P}}
\newcommand{\IZ}{{\mathbb Z}}
\newcommand{\Hg}{\mathcal{\widehat{H}}}
\newcommand{\curs}{\EuR}
\newcommand{\Or}{\curs{Or}}
\newcommand{\charac}{\operatorname{char}}
\newcommand{\colim}{\operatorname{colim}}
\newcommand{\Ext}{\operatorname{Ext}}
\newcommand{\Hom}{\operatorname{Hom}}
\newcommand{\im}{\operatorname{im}}
\newcommand{\ind}{\operatorname{ind}}
\newcommand{\res}{\operatorname{res}}
\newcommand{\pt}{\{\bullet\}}
\newcommand{\bredoncover}[3]{H^{#1}_{\mathbb{Z} \cover}(#3, #2)}
\newcommand{\bredoncell}[3]{H^{#1}_{\mathbb{Z} \OrGF{G}{\mathcal{FIN}}}(#3, #2)}
\newcommand{\cover}{\mathcal{N}_{G} \mathcal{U}}
\newcommand{\ktheory}[3]{K_{#1}^{#2}(#3, P)}
\newcommand{\Fred}{\ensuremath{{\mathrm{Fred}}}}
\newcommand{\UU}{\mathcal{U}}
\newcommand{\HH}{\mathcal{H}}
\newcommand{\BB}{\mathcal{B}}
\newcommand{\KK}{\mathcal{K}}
\newcommand{\idealGX}[2]{{\bf I}_{#1,#2}}
\newcommand{\eub}[1]{\underline{E}#1}              
\newcommand{\OrGF}[2]{\Or_{#2}(#1)}                
\newcommand{\higherlim}[3]{{\setbox1=\hbox{\rm lim}
        \setbox2=\hbox to \wd1{\leftarrowfill} \ht2=0pt \dp2=-1pt
        \mathop{\vtop{\baselineskip=5pt\box1\box2}}
        _{#1}}^{#2}#3}
\newcommand{\version}[1]                       
{\begin{center} last edited on #1\\
last compiled on \today\
name of texfile: \jobname
\end{center}
}
\newcounter{commentcounter}
\begin{document}

\typeout{----------------------------  linluesau.tex  ----------------------------}


\setcounter{section}{0}


\begin{abstract}
We  compare  different  algebraic  structures  in  twisted  equivariant  $K$-Theory  for  proper  actions  of  discrete  groups. After the  construction  of a  module  structure  over  untwisted  equivariant  K-Theory,  we  prove  a  completion  Theorem  of  Atiyah-Segal  type for  twisted  equivariant  K-Theory. Using  a  Universal  Coefficient  Theorem,  we  prove  a  cocompletion  Theorem  for  Twisted Borel K-Homology for  discrete  Groups.
\end{abstract}

 \maketitle

The  Completion  Theorem in  equivariant  $K$-theory by  Atiyah and  Segal \cite{atiyahsegalcompletion} had a remarkable influence  on  the  development  of topological $K$-theory and  computational methods related  to  it.

Twisted  equivariant  $K$-theory  for  proper  actions  of  discrete  groups was  defined  in \cite{barcenasespinozajoachimuribe}  and  further  computational  tools,  notably  a  version  of  Segal's  spectral  sequence,   have  been  developed  by  the  authors  and collaborators in \cite{barcenasespinozauribevelasquez} and \cite{barcenasvelasquez}. 

In this  work,   we  examine  twisted  equivariant  $K$-theory with  the  above mentioned  methods  as  a  module  over  its  untwisted  version and  prove  a  generalization  of  the completion  theorem by  Atiyah and  Segal.

It  turns  out  that  in the  case   of  groups  which  admit a  finite  model  for  the  classifying   space  for  proper  actions $\eub{G}$, the ring   defined  as  the  zeroth (untwisted) $G$-equivariant  $K$-theory  ring  $K_G^0(\eub{G})$ is   Noetherian. Hence,  usual  commutative  algebraic  methods  can  be  applied  to   deal  with  completion problems on  noetherian  modules over  it,  as  it  has  been  done  in  other  contexts  in  the  literature, \cite{atiyahsegalcompletion}, \cite{lueckoliverbundles},  \cite{cantarerocompletion}, \cite{lahtinenatiyahsegal}.  

Using  a  Universal  Coefficient  Theorem  developed  in the  analytical  setting \cite{rosenbergschochet}, we  prove  a  version  of  the  co-completion  theorem   in twisted Borel  Equivariant K-homology,  thus  extending  results   in \cite{joachimlueck} to the  twisted  case. 

This  work  is  organized  as  follows: 

In  section \ref{sectioncompletion}, we  collect  results  on the   multiplicative (twist-mixing) structures on  twisted  equivariant  $K$-theory  following  its  definition  in \cite{barcenasespinozajoachimuribe}. 
 We  also recall  in this  section  the  spectral  sequence  of  \cite{barcenasespinozauribevelasquez} and  the  necessary  notions  of  Bredon-type cohomology and  $G$-CW  complexes.
 
 In  section \ref{sectionmodule}, we  examine the  ring  structure  over  the ring defined  by  the  zeroth untwisted $K$-theory   $K_G^0(\eub{G})$,  and  establish  the   noetherian  condition  for certain  relevant modules  over  it  given by  twisted  equivariant $K$-theory  groups. 

The main theorem,  \ref{theoremcompletion} is  proved  in  section \ref{sectionatiyahsegal}. 


\begin{theoremn}
Let  $G$  be a  group  which  admits  a  finite  model  for $\eub{G}$, the  universal  space  for  proper  actions. Let  $X$ be  a  finite, proper $G$-CW  complex.  Let  $\idealGX{G}{\eub{G}}$ be  the  augmentation ideal (defined in section  \ref{sectionatiyahsegal} ).

Let $p:X\times EG\to X$ be denote the projection to the first coordinate. For  any projective unitary, $G$-equivariant  stable  bundle  $P$,     the  pro-homomorphism 
$$\varphi_{\lambda,p}: \big \{ K_G^*(X,P)/ {{\bf I}_{G,\eub{G}}}^n K_G^*(X,P)  \big \}\longrightarrow  \big \{K_G^*(X\times EG^{n-1},p^{*}(P))\big \} $$
is  a  pro-isomorphism. In  particular, the  system $\big \{K_G^*(X\times EG^{n-1},p^{*}(P))\big \}$ satisfies  the  Mittag-Leffler  condition  and  the  $\lim^ 1$ term is  zero. 
\end{theoremn}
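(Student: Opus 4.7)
The plan is to follow the classical Atiyah--Segal strategy, reducing along the G-CW filtration of $X$ to the orbit case $X = G/H$ with $H$ finite, and then invoking the twisted completion theorem for finite groups. Concretely, since $X$ is a finite proper $G$-CW complex, it is built inductively by attaching cells of the form $G/H \times D^m$ with $H$ a finite subgroup. For a pair $(Y,A)$ of finite proper $G$-CW complexes, the twisted Mayer--Vietoris / long exact sequence from the spectral sequence machinery of \cite{barcenasespinozauribevelasquez} yields compatible long exact sequences for $K_G^*(-,P)$ and for $K_G^*(- \times EG^{n-1}, p^{*}P)$. Tensoring the first with the pro-system $\{K_G^0(\eub{G})/{\bf I}_{G,\eub{G}}^n\}$ is exact because on finitely generated modules over the Noetherian ring $K_G^0(\eub{G})$ (established in Section \ref{sectionmodule}) the ${\bf I}_{G,\eub{G}}$-adic completion is exact by Artin--Rees. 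Hence a standard pro-category five-lemma argument reduces the theorem to the case $X = G/H$.

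For the base case $X = G/H$ with $H$ finite, the restriction-induction adjunction (with its twisted refinement from \cite{barcenasespinozajoachimuribe}) identifies
\[
K_G^*(G/H, P) \;\cong\; K_H^*(\mathrm{pt}, P|_H), \qquad
K_G^*\bigl(G/H \times EG^{n-1}, p^{*}P\bigr) \;\cong\; K_H^*\bigl(EG^{n-1}, P|_H\bigr).
\]
Since $H$ is finite, $EG^{n-1}$ restricted to $H$ is a free $H$-CW approximation of a point, so one is reduced to proving that the pro-map
$\{K_H^*(\mathrm{pt}, P|_H)/I_H^n K_H^*(\mathrm{pt}, P|_H)\} \to \{K_H^*(EH^{n-1}, P|_H)\}$
is a pro-isomorphism. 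This is the twisted Atiyah--Segal completion theorem for the finite group $H$, which is available from \cite{lahtinenatiyahsegal} (or can be derived directly from \cite{atiyahsegalcompletion} together with the finite-group twist machinery, since $R(H)$ is Noetherian and $K_H^*(\mathrm{pt}, P|_H)$ is finitely generated over it).

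The subtle technical point, and the place I expect the main obstruction, is the compatibility of the two filtrations under restriction: one must verify that the restriction homomorphism $K_G^0(\eub{G}) \to R(H)$ sends ${\bf I}_{G,\eub{G}}$ into the kernel of the augmentation $R(H) \to \mathbb{Z}$, and conversely that the ${\bf I}_{G,\eub{G}}$-adic and $I_H$-adic topologies on $K_H^*(\mathrm{pt}, P|_H)$ coincide. The first inclusion is formal from naturality of the augmentation over the orbit category. The reverse comparison uses the Noetherian property from Section \ref{sectionmodule} together with an Artin--Rees-type argument: since $K_H^*(\mathrm{pt}, P|_H)$ is finitely generated over $K_G^0(\eub{G})$ via restriction, the two adic filtrations on it are cofinal, hence produce the same pro-system up to pro-isomorphism.

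Once the pro-isomorphism is established, the Mittag--Leffler condition and the vanishing of $\lim^1$ for $\{K_G^*(X\times EG^{n-1},p^{*}P)\}$ follow automatically, because the source pro-system $\{K_G^*(X,P)/{\bf I}_{G,\eub{G}}^n K_G^*(X,P)\}$ is surjective in its structure maps and consists of finitely generated modules over the Noetherian ring $K_G^0(\eub{G})$, and these properties transfer across pro-isomorphisms. The expected hard step is thus not the inductive scaffolding, but rather producing a clean comparison of the global augmentation ideal ${\bf I}_{G,\eub{G}}$ with the classical $I_H$ on each finite isotropy orbit; the Noetherian hypothesis and the finiteness of $\eub{G}$ are exactly what make this comparison tractable.
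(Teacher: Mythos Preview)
Your overall architecture matches the paper's proof closely: induct over the cell structure using Mayer--Vietoris and pro-exactness (the paper's Lemmas~\ref{lemmalim} and~\ref{lemmaexact}), reduce to $X=G/H$, pass via induction to $K_H^*(\mathrm{pt},P|_{eH})$, and then quote Lahtinen's twisted completion theorem for the finite group $H$. You also correctly isolate the crux of the argument as the comparison of the ${\bf I}_{G,\eub{G}}$-adic and $I_H$-adic filtrations on $K_H^*(\mathrm{pt},P|_H)$.

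The gap is in how you propose to settle that comparison. You write that ``since $K_H^*(\mathrm{pt}, P|_H)$ is finitely generated over $K_G^0(\eub{G})$ via restriction, the two adic filtrations on it are cofinal'' and invoke Artin--Rees. This does not work: Artin--Rees compares the $I$-adic filtration on a submodule with the filtration induced from the ambient module for a \emph{single} ideal $I$; it says nothing about two different ideals $J\subset I_H$ of $R(H)$ defining the same topology. Finite generation alone is certainly insufficient (take $R=\mathbb{Z}[x]$ with the $(2)$-adic and $(x)$-adic filtrations). What is actually needed is that $I_H$ lies in the radical of the ideal $J$ generated by the image of ${\bf I}_{G,\eub{G}}$, i.e.\ that every prime of $R(H)$ containing $J$ also contains $I_H$.

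The paper supplies exactly this missing ingredient, and it is not formal: it uses Atiyah's description of the prime ideals of $R(H)$ as $\chi_v^{-1}(\mathfrak{p})$ for conjugacy classes $v$ and primes $\mathfrak{p}\subset\mathbb{Z}[\zeta]$, together with part~(iv) of Proposition~\ref{propositionnoetherianring}, which produces for each prime $p$ a vector bundle $E$ over $\eub{G}$ of dimension prime to $p$ whose fibrewise character vanishes on elements of order prime to $p$. Restricting $[\mathbb{C}^{\dim E}]-[E]$ to $G/H$ gives an element of $J\subset R(H)$ whose character at $s$ is a unit mod $\mathfrak{p}$, forcing $\mathcal{P}=\chi_s^{-1}(\mathfrak{p})\supset I_H$. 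That bundle construction is the real content behind the filtration comparison, and it is what your Artin--Rees sentence should be replaced by.
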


  Finally,  section \ref{sectioncocompletion} deals  with  the  proof  of  the  cocompletion  theorem \ref{theoremcocompletion} involving Twisted Borel   $K$-homology.

  \begin{theoremn}
Let $G$  be  a  discrete  group. 

Assume  that  $G$  admits  a  finite  model  for  $\eub{G}$, and  let $X$ be a   finite  $G$-CW complex.  Let  $\idealGX{G}{\eub{G}}$ be  the  augmentation ideal (defined in section  \ref{sectioncompletion}).

For  any projective unitary, $G$-equivariant  stable  bundle  $P$, there  exists a  short  exact  sequence  
\begin{multline*}0\to \colim_{n\geq1} \Ext^{1}_\mathbb{Z} (K^*_G(X,P)/\idealGX{G}{\eub{G}}^n\cdot K_G^*(\eub{G},P) , \mathbb{Z})   \to \\ K_*(X\times_G EG, p^*(P)) \to \colim_{n\geq 1} \Hom_\IZ(K^*_G(X,P)/\idealGX{G}{\eub{G}}^n\cdot K_G^*(\eub{G},P) ,\IZ)\to0
\end{multline*}
\end{theoremn}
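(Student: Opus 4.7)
The plan is to apply the Rosenberg--Schochet Universal Coefficient Theorem \cite{rosenbergschochet} degreewise to a finite skeletal exhaustion of $X\times_{G}EG$, and then to transport the algebraic identification furnished by the completion theorem \ref{theoremcompletion} across the UCT by an ind-colimit argument. Since $\eub{G}$ admits a finite model, one can fix a $G$-CW model for $EG$ in which each skeleton $EG^{n-1}$ is a finite, free $G$-CW complex (cf.\ \cite{joachimlueck}); the $G$-action on $X\times EG^{n-1}$ is then free, so $X\times_{G}EG^{n-1}$ is a finite CW complex, the equivariant twist $p^{*}(P)$ descends to an honest twist on the quotient, and
\begin{equation*}
K_{*}\bigl(X\times_{G}EG,\, p^{*}(P)\bigr) \;\cong\; \colim_{n}\, K_{*}\bigl(X\times_{G}EG^{n-1},\, p^{*}(P)\bigr)
\end{equation*}
by continuity of topological $K$-homology along the resulting filtered colimit of separable nuclear twisted $C^{*}$-algebras.

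For each $n$ the Rosenberg--Schochet UCT gives a short exact sequence
\begin{equation*}
0 \to \Ext^{1}_{\IZ}\bigl(K^{*+1}(X\times_{G}EG^{n-1}, p^{*}(P)),\, \IZ\bigr) \to K_{*}\bigl(X\times_{G}EG^{n-1}, p^{*}(P)\bigr) \to \Hom_{\IZ}\bigl(K^{*}(X\times_{G}EG^{n-1}, p^{*}(P)),\, \IZ\bigr) \to 0.
\end{equation*}
Freeness of the $G$-action on $EG^{n-1}$ identifies the non-equivariant $K$-theory of the quotient with the equivariant $K$-theory $K_{G}^{*}(X\times EG^{n-1}, p^{*}(P))$; the completion theorem \ref{theoremcompletion} identifies the latter inverse system with $\{K_{G}^{*}(X,P)/\idealGX{G}{\eub{G}}^{n}K_{G}^{*}(X,P)\}_{n}$ as pro-systems. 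A pro-isomorphism of inverse systems of abelian groups dualises, under the contravariant functors $\Hom_{\IZ}(-,\IZ)$ and $\Ext^{1}_{\IZ}(-,\IZ)$, to an isomorphism of the resulting ind-systems; since filtered colimits of abelian groups are exact, passing to $\colim_{n}$ in the UCT sequence and substituting these identifications yields the required three-term sequence.

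The main obstacle I anticipate is the careful control of twists along the skeletal exhaustion at the operator-algebraic level: one must verify that the $p^{*}(P)$-twisted $K$-homology continuity statement genuinely holds for the filtered colimit of $C^{*}$-algebras associated to the $X\times_{G}EG^{n-1}$, and that restriction of the twist between successive levels is compatible with the structure maps of the pro-system appearing in the completion theorem. Here the Mittag--Leffler condition and vanishing of $\lim^{1}$ established in \ref{theoremcompletion} should be precisely what is needed to promote the pro-isomorphism to an honest ind-isomorphism after dualising, so that the two sides of the UCT colimit match up cleanly without spurious higher-derived-limit contributions.
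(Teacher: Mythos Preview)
Your proposal is correct and follows essentially the same approach as the paper: both combine the Rosenberg--Schochet Universal Coefficient Theorem with the completion theorem \ref{theoremcompletion} through a finite-type skeletal filtration and then pass to colimits. The only cosmetic difference is that the paper first replaces $X\times_{G}EG$ by a homotopy-equivalent CW complex $Y$ of finite type (via the lemma quoted from \cite{joachimlueck}) and applies the UCT there, whereas you work directly with the quotients $X\times_{G}EG^{n-1}$ and apply the UCT levelwise; these routes are interchangeable once one knows, as you note, that $EG$ can be taken with finite skeleta when $\eub{G}$ admits a finite model.
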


\tableofcontents

\subsection*{Aknowledgments}
The first  author  thanks  the  support  of PAPIIT  research grant IA100315.

 The  second author  thanks  partial support  of  a  UNAM Postdoctoral Fellowship, as  well as  partial support  by the project \emph{Index morphism in twisted K-theory} with ID 8165 from Faculty of Sciences of Pontificia Universidad Javeriana, Bogot\'a, Colombia.

The  first  and  second  author thank  Universit\'e  Toulouse  III Paul Sabatier, as  well  as  the  Laboratoire International  Solomon  Lefschetz (LAISLA) for  support during  a  visit to  Toulouse,  where  parts  of  this  work  were  written.

\newpage

\section{Preliminaries  on (Twisted) Equivariant  K-theory  for  Proper  and  Discrete  Actions}\label{sectioncompletion}

\begin{definition}Let $G$ be a discrete group.
Recall  that  a  $G$-CW  complex structure  on  the  pair $(X,A)$ of topological $G$-spaces consists  of a  filtration of  the $G$-space $X=\cup_{-1\leq n } X_{n}$ with $X_{-1}=A$, where  every  space   is inductively  obtained  from  the  previous  one   by  attaching  $G$-cells  in $G$-equivariant pushout  diagrams  
$$\xymatrix{\coprod_{i} S^{n-1}\times G/H_{i} \ar[r] \ar[d] & X_{n-1} \ar[d] \\ \coprod_{i}D^{n}\times G/H_{i} \ar[r]& X_{n}}$$

 Recall  that  a  $G$-CW-complex  is  proper  if  the stabilizer  subgroups  of  points  are all  finite.  We  say  that a  proper  $G$-CW complex  is  finite  if  it  is  constructed  out    of  a  finite  number  of  cells of  the  form  $G/H\times  D^n$. 

\end{definition}

We  recall the  notion  of  the  classifying  space  for  proper  actions following \cite{lueckclassifying}: 
\begin{definition}
 Let  $G$ be  a  discrete group.  A model  for  the classifying  space  for  proper  actions is  a $G$-CW  complex  $\eub{G}$  with  the  following  properties: 
\begin{itemize}
 \item{All  isotropy  groups  are  finite.}
\item{For  any  proper  $G$-CW  complex $X$  there  exists a  unique $G$-map $X\to \eub{G}$ up  to $G$-homotopy  . }
\end{itemize}
\end{definition}

The  classifying  space  for  proper  actions  always  exists, it is  unique  up  to $G$-homotopy  equivalence. The  following  list   contains  some  examples. We  refer to \cite{lueckclassifying} for  further  discussion.

 \begin{itemize}
\item{If  $G$  is  a  finite group,  then  the  singleton  space  is  a  model  for  $\eub{G}$. }
\item{Let $G$ be  a  group  acting properly  and  cocompactly  on  a     ${\rm Cat}(0)$ space  $X$. Then  $X$ is a model for  $\eub{G}$. }
\item{Let  $G$  be  a  Coxeter  group. The  Davis complex  is  a model for   $\eub{G} $. }See \cite{daviscomplex}.
\item{Let  $G$ be  a   mapping  class  group  of  a  surface. The   Teichm\"uller  space  is a  model  for $\eub{G}$. } 
\end{itemize}

Let  $G$ be a  discrete  group.  A  model for  the  classifying  space  for  free  actions $EG$  is a  free  contractible  $G$-CW  complex. Given a  model  $EG$ for  the  classifying  space  for  free actions, the space $BG$ is  the $CW$-complex $EG/G$.   

The  following  result  is  proved  in  \cite{joachimlueck},  Lemma  26  in page  6. 

\begin{lemma}
Let  $X$  be a  finite  proper  $G$-CW  complex. Then $X\times_G EG$  is  homotopy  equivalent  to  a  CW complex  of  finite  type. 
\end{lemma}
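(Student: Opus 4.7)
The plan is to argue by induction on the number of equivariant cells of the finite proper $G$-CW complex $X$, reducing the claim to the well-known fact that, for a finite group $H$, the classifying space $BH$ has the homotopy type of a CW complex of finite type (i.e.\ finitely many cells in each dimension). Since $X$ is a finite proper $G$-CW complex, it has finitely many orbit types, all of the form $G/H$ with $H$ finite, and only finitely many cells overall, so the induction will terminate.

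First I would record the key computation for the Borel construction on an equivariant cell. If $Y$ is any $H$-space (for $H\le G$ finite), then the standard identification
\[
(G/H\times Y)\times_G EG \;=\; Y\times_H EG
\]
holds, where $H$ acts on $EG$ by restriction. Taking $Y=D^n$ with trivial $H$-action gives $(G/H\times D^n)\times_G EG \simeq EG/H$, which is a model for $BH$ because $EG$, regarded as an $H$-space, is free and contractible. Taking $Y=S^{n-1}$ with trivial $H$-action gives $(G/H\times S^{n-1})\times_G EG\simeq S^{n-1}\times BH$. Since $H$ is finite, $BH$ is of finite type (this is standard; for example, the bar construction can be replaced up to homotopy by a CW model with finitely many cells in each dimension, as every finite group is of type $F_\infty$), and hence so is $S^{n-1}\times BH$.

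Next I would perform the induction. The base case $X=\emptyset$ is trivial. For the inductive step, write $X$ as a pushout
\[
\xymatrix{
G/H\times S^{n-1}\ar[r]\ar[d] & X'\ar[d] \\
G/H\times D^{n}\ar[r] & X
}
\]
where $X'$ has one fewer equivariant cell. Applying $(-)\times_G EG$ preserves pushouts and sends the top horizontal map to a cofibration, so $X\times_G EG$ is the pushout of $X'\times_G EG$ and $D^n\times_H EG$ along $S^{n-1}\times_H EG$. By induction $X'\times_G EG$ has the homotopy type of a CW complex of finite type; by the previous paragraph so do the other two corners. A homotopy pushout of spaces of finite type along a cofibration is again of finite type (the cellular chain complex of the pushout fits in a Mayer--Vietoris sequence with finitely generated terms in each degree, and a finite-type CW model can be assembled dimension by dimension), so $X\times_G EG$ has the homotopy type of a CW complex of finite type as well.

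The only non-formal ingredient is the fact that $BH$ is of finite type for finite $H$; everything else is a direct manipulation of the Borel construction and a routine pushout argument. The main potential obstacle is ensuring that the identifications above are strict enough to glue into an actual CW structure (or at least a homotopy equivalent one) of finite type at each stage, which is handled by replacing each $BH$-factor by a finite-type CW model and using the homotopy invariance of the pushout along cofibrations.
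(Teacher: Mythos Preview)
The paper does not actually prove this lemma: immediately before the statement it says ``The following result is proved in \cite{joachimlueck}, Lemma 26 in page 6'' and leaves it at that. So there is no in-paper argument to compare against; your proposal supplies a proof where the authors merely cite one.

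Your argument is correct and is essentially the standard one (and almost certainly the one in the cited reference). The two ingredients---the identification $(G/H\times D^n)\times_G EG\simeq BH\times D^n$ via $EG/H\simeq BH$, and the fact that $BH$ has finite type for finite $H$---are exactly right, and the induction over the equivariant cells is the natural way to assemble them. One small remark: your justification of ``homotopy pushout of finite-type spaces is finite type'' via Mayer--Vietoris on chains is not by itself sufficient (finitely generated homology does not automatically give a finite-type model), but your final paragraph points to the correct fix: choose finite-type CW models for $BH$ and for $X'\times_G EG$, make the attaching map $S^{n-1}\times BH\to X'\times_G EG$ cellular, and take the double mapping cylinder. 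That produces an honest finite-type CW complex homotopy equivalent to the pushout, which is what is needed.
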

\subsection{Twisted  equivariant $K$-Theory.  }
Twisted  equivariant  K-Theory  for  proper  actions  of  discrete  groups  was  introduced  in \cite{barcenasespinozajoachimuribe}. 

In what follows we will recall its definition using Fredholm bundles and its properties following the  above mentioned article. The  crucial  difference  to  \cite{barcenasespinozajoachimuribe} is  the  use  of  graded  Fredholm  bundles, which  are  needed  for  the   definition  of  the multiplicative  structure.  


Let $\HH$ be a separable Hilbert space and let 
$$\UU(\HH):= \{ U : \HH \to \HH \mid U\circ U^*= U^*\circ U = \mbox{Id} \}$$ the group
of unitary operators acting on $\HH$ with the compact open topology. Note that in $\UU(\HH)$ the compact open topology agree with the strong operator topology (Thm. 1.2 in \cite{EU}).

We consider this topology instead the norm topology because the last one is too restrictive for our purposes, for example for the regular representation $\HH=L^2(G)$ the action $$G\to \UU(\HH)$$ is not norm continuous. 
Topologize the  group $P\UU(\HH)$  from the exact  sequence
$$1\to S^1\to  \UU(\HH)\to P\UU(\HH)\to  1 .$$

Let $G$ be a Lie group, a continuous homomorphism $a$ defined  on a  Lie  group $G$,  $a : G \to P\UU(\HH)$ is called  
stable if the unitary representation $\HH$ induced by the
homomorphism $\widetilde{a}: \widetilde{G}= a^*\UU(\HH) \to
\UU(\HH)$ contains each of the irreducible representations of
$\widetilde{G}$ infinitely often, where the  subgroup $S^1 \subset \widetilde{G}$ acts  by   scalar  multiplication.  

Here $\widetilde{G}$ and $\widetilde{a}$ denote respectively the
topological group and the continuous homomorphism defined by the pullback
square
$$\xymatrix{
\widetilde{G} \ar[r]^{\widetilde{a}} \ar[d] & \UU(\HH) \ar[d] \\
G \ar[r]^a & P\UU(\HH).}
$$

\begin{definition}\label{def projective unitary G-equivariant stable bundle}
Let  $X$ be  a proper $G$-CW  complex.  A projective unitary $G$-equivariant stable bundle over $X$
is a principal $P\UU(\HH)$-bundle
$$P\UU(\HH) \to P \to X$$
where $P\UU(\HH)$ acts on the right, endowed with a left $G$-action lifting the action on $X$ such that:
\begin{itemize}
\item the left $G$-action commutes with the right $P\UU(\HH)$
action, and \item for all $x \in X$ there exists a
$G$-neighborhood $V$ of $x$ and a $G_x$-contractible slice $U$ of
$x$ with $V$ equivariantly homeomorphic to $ U \times_{G_x} G$
with the action $$G_x \times (U \times G) \to U \times G, \ \ \ \
k \cdot(u,g)= (ku, g k^{-1}),$$ together with a local
trivialization
$$P|_V \cong  (P\UU(\HH) \times U) \times_{G_x} G$$ where the action of the isotropy group
is:
 \begin{eqnarray*}
 G_x \times \left( (PU(\HH) \times U) \times G \right)& \to & (P\UU(\HH) \times U) \times G
 \\
\ \left(k , ((F,y),g)\right)& \mapsto & ((f_x(k)F, ky), g k^{-1})
\end{eqnarray*} with $f_x : G_x \to PU(\HH)$ a fixed stable
homomorphism.
\end{itemize}

\end{definition}
Notice that $P\UU(\HH)$ acts continuously on the projective space associated to $\HH$,  denoted by  $\IP(\HH)$. 

Given a  projective unitary $G$-equivariant stable bundle $P$ over $X$,  one  constructs  the  $G$-equivariant fiber bundle
$$\IP(\HH)\to P\times_{P\UU(\HH)}\IP(\HH)\to X$$
with structure group $P\UU(\HH)$.
\begin{definition}
	Let $P\times_{P\UU(\HH)}\IP(\HH)$ and $P'\times_{P\UU(\HH)}\IP(\HH)$ be two fiber bundles defined as above. The  fiber  bundle of Hilbert-Schmidt  homomorphisms, denoted by  $$L_{H-S}(P\times_{P\UU(\HH)}\IP(\HH),P'\times_{P\UU(\HH)}\IP(\HH))$$  is  the  $G$-equivariant  fiber  bundle  with structure group $P\UU(\HH)$, and   local  trivializations  given  as  follows.
	
	For all $x\in X$ there is a $G$-neighborhood $V$ of $x$ and a $G_x$-contractible slice $U$ of $X$ with $$V\cong_GU\times_{G_x}G$$
	and local trivializations 
    \begin{align*}P\times_{P\UU(\HH)}\IP(\HH)\mid V&\cong (\IP(\HH)\times U)\times_{G_x}G\text{, and}\\
    P'\times_{P\UU(\HH')}\IP(\HH)\mid V&\cong (\IP(\HH')\times U)\times_{G_x}G.	\end{align*}
The  bundle $L_{H-S}(P\times_{P\UU(\HH)}\IP(\HH),P'\times_{P\UU(\HH)}\IP(\HH))$  has  as  fiber  the  projective space  $\IP(L_{H-S}(\HH^*,\HH'))$ of  Hilbert-Schmidt   linear maps from  the  dual  of $\HH$ to  $\HH'$ in the  norm  topology. 

Thus  one  has a  local  trivialization  

$$L_{H-S}(P\times_{P\UU(\HH)}\IP(\HH),P'\times_{P\UU(\HH)}\IP(\HH))\mid V\cong\big(\IP(L_{H-S}(\HH^*,\HH'))\times U)\big)\times_{G_x}G.$$

As noted in \cite{atiyahsegal} on pages 5-6, although $\HH_x$ is not determined canonically by $\IP(\HH)$  and $\HH'$ neither by $\IP(\HH')$, the  bundle   $L_{H-S}(P\times_{P\UU(\HH)}\IP(\HH), P'\times_{P\UU(\HH)}\IP(\HH')) $     is  canonically  determined by $P$  and $P'$.

\end{definition}
\begin{definition}[Tensor product of projective unitary $G$-equivariant stable bundles]
	Let $P$ and $P'$ be projective unitary $G$-equivariant stable bundles over $X$. Define $P\otimes P'$ as the projective unitary $G$-equivariant stable bundle associated to the fiber bundle 
	$$L_{H-S}(P\times_{P\UU(\HH)}\IP(\HH),P'\times_{P\UU(\HH)}\IP(\HH)).$$
\end{definition}

In \cite{barcenasespinozajoachimuribe}, Theorem  3.8, the  set  of  isomorphism classes  of  projective  unitary stable $G$-equivariant  bundles, denoted  by  $Bun_{st}^G(X, P\UU(\HH))$   was  seen  to  be  in  bijection with  the set of  third  degree Borel  cohomology  classes  with  integer  coefficients $H^3(X\times_G EG, \IZ)$. This bijection can be extended to a group isomorphism.

\begin{proposition}\label{propcohomology}
The  map 
$$ Bun _{st}^G(X, P\UU(\HH)) \to H^3(X\times_G EG, \IZ) $$
is  an  abelian  group  isomorphism if  the  left  hand  side is  furnished  with  the  tensor  product  as  additive  structure. 

\end{proposition}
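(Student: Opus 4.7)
The underlying bijection $\Phi:\mathrm{Bun}_{st}^G(X, P\UU(\HH)) \to H^3(X\times_G EG, \IZ)$ is already established in \cite{barcenasespinozajoachimuribe}, Theorem 3.8, so the proof reduces to two tasks: (A) verify that the tensor product defined above endows the left hand side with the structure of an abelian group, and (B) check that $\Phi$ intertwines this tensor product with the ordinary additive structure on integer cohomology.

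For (A), well-definedness has been secured by Lemma \ref{lemmaadditiontwists}. For associativity and commutativity, the natural isomorphisms of Hilbert--Schmidt bundles
$$L_{HS}(E_{1}^{*},E_{2})\otimes L_{HS}(E_{2}^{*},E_{3}) \; \cong\; L_{HS}(E_{1}^{*},E_{3}), \qquad L_{HS}(E_{1}^{*},E_{2})\cong L_{HS}(E_{2}^{*},E_{1})^{*},$$
combined with the fact that any two stable Hilbert bundles become equivariantly isomorphic after tensoring with a third stable Hilbert bundle, reduce the check to naturality of $\otimes$ on Hilbert bundles. The two-sided identity is the trivial bundle: if $E$ is trivialized and we compute $L_{HS}(E^{*},F)\cong F$, projectivization returns the given bundle up to isomorphism in $\mathrm{Bun}_{st}^G$. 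Inverses are provided by Remark \ref{remarkcero}: the bundle $\bar P$ satisfies that $P\otimes\bar P$ is classified by the projectivization of $L_{HS}(E^{*},E)$, which carries the identity endomorphism as a distinguished global section and is therefore isomorphic to the trivial projective bundle.

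For (B), the classification identifies $\Phi$ with the map induced by a classifying map $X\times_{G}EG\to BP\UU(\HH)$, together with the equivalence $BP\UU(\HH)\simeq K(\IZ,3)$ coming from $P\UU(\HH)\simeq K(\IZ,2)$. The continuous multiplication on $P\UU(\HH)$ furnished by the tensor product of operators on $\HH\otimes\HH\cong \HH$ makes $P\UU(\HH)$ into a topological group whose induced $H$-space structure on $BP\UU(\HH)$ is the loop sum on $K(\IZ,3)$, hence corresponds to the standard addition on $H^{3}$. The compatibility now is formal: if $f_{i}\colon X\times_{G}EG\to BP\UU(\HH)$ classifies $P_{i}$, then the composite of $(f_{1},f_{2})$ with the tensor multiplication classifies a bundle whose associated Hilbert--Schmidt bundle is isomorphic to $L_{HS}((E_{1}\otimes E_{2})^{*}, E_{1}\otimes E_{2})$, which is precisely $P_{1}\otimes P_{2}$. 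Passing through the Borel construction reduces the equivariant assertion to the non-equivariant statement, for which the argument above (or its Atiyah--Segal formulation, \cite{atiyahsegal}, Theorem 2.1) applies.

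The main obstacle is the compatibility of topologies: the tensor product here is defined via Hilbert--Schmidt endomorphism bundles with the $\ast$-strong topology, while the classification and the $H$-space structure on $BP\UU(\HH)$ naturally live in the compact-open (and compactly generated) topology on $P\UU(\HH)$. One must invoke the coincidence of these topologies on $\UU(\HH)$ and on $P\UU(\HH)$ established in \cite{espinozauribeunitaryoperators} (cf.\ Lemma \ref{isobundlestopologies}) to translate the Hilbert--Schmidt picture used in defining $\otimes$ into the classifying-map picture used by $\Phi$. Once that translation is in place, items (A) and (B) are natural verifications rather than computations.
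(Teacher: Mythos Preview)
Your outline is sound, but it takes a different route from the paper. You proceed by separately establishing (A) that $\otimes$ is an abelian group operation and (B) that $\Phi$ is a homomorphism, each via explicit Hilbert--Schmidt bundle identifications. The paper instead runs an Eckmann--Hilton argument: it transports both the tensor product $\star$ and the $H$-space multiplication $*$ coming from $BP\UU(\HH)\simeq K(\IZ,3)$ to the common set $\pi_0\big(\mathrm{Maps}(X,\mathcal{B})^G\big)\cong \pi_0\big(\mathrm{Maps}(X\times_G EG, BP\UU(\HH))\big)$, observes that the two operations share a unit (the constant map) and are mutually distributive (a consequence of the bundle classification), and then invokes the standard lemma that two mutually distributive $H$-space structures with a common unit must coincide and be commutative. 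This delivers (A) and (B) simultaneously, with no explicit associativity, commutativity, or inverse verification.

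Your direct approach is more hands-on but demands care in the bookkeeping. In particular, the formula in your part (B) is off: the bundle classified by the composite of $(f_1,f_2)$ with the tensor multiplication has associated Hilbert bundle $E_1\otimes E_2$, i.e.\ $L_{HS}(E_1^*,E_2)$, not $L_{HS}((E_1\otimes E_2)^*,E_1\otimes E_2)$---the latter would represent twice the cohomology class. This is exactly the sort of slip the Eckmann--Hilton shortcut is designed to sidestep.
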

\begin{proof}
In Theorem 3.8 in \cite{barcenasespinozajoachimuribe} was constructed  a  classifying  $G$-space  $\mathcal{B}$, a  universal  projective  unitary stable $G$-equivariant bundle $\mathcal{E}\to \mathcal{B}$,  as  well  as  a  weak homotopy  equivalence 
$$f: Maps(X, \mathcal{B})^G \to Maps(X\times_G EG, B P\UU(\HH)).$$ 
 (This  was  only  stated  for  $\pi_0$   there,  but  the  argument works for higher  homotopy  groups). 
Moreover in Theorem 3.8 in \cite{barcenasespinozajoachimuribe} was constructed  a bijection  by taking the pullback of the universal bundle 
$$Bun_{st}^G(X, P\UU(\HH))\xrightarrow{\phi}\pi_0(Maps(X, \mathcal{B})^G).$$It is also proved that $Maps(EG,BP\UU(\HH))$ is a universal space for projective unitary stable $G$-equivariant bundles. From this fact it is clear that the tensor product operation corresponds with the Hopf space operation in $$Maps(EG,BP\UU(\HH))$$ induced by the operation in $BP\UU(\HH)$. Now using the group isomorphism
$$\pi_0(X,Maps(EG,BP\UU(\HH)))\cong \pi_0(X\times_GEG,BP\UU(\HH)),$$ we have that there is group isomorphism 

$$Bun_{st}^G(X, P\UU(\HH))\cong\pi_0(X\times_GEG,BP\UU(\HH))\cong  H^3(X\times_G EG, \IZ),$$where the last isomorphism is obtained because $BP\UU(\HH)$ is a $K(\IZ,3)$-space.


 

\end{proof}

\begin{definition}
Let  $X$  be a  proper  $G$-CW  complex and let $\HH$ be a separable Hilbert  space. The  space  $\Fred'(\HH)$
consists of pairs $(A,B)$ of bounded operators on $\HH$ such that
$AB -1$ and $BA -1$ are compact operators. Endow $\Fred'(\HH)$
with the topology induced by the embedding \begin{eqnarray*}
\Fred'(\HH) & \to & {\mathsf{B}}(\HH) \times  {\mathsf{B}}(\HH)
\times {\mathsf{K}}(\HH)
\times {\mathsf{K}}(\HH) \\
(A,B) & \mapsto & (A,B,AB-1, BA-1)
\end{eqnarray*}
where ${\mathsf{B}}(\HH)$ denotes the bounded operators on $\HH$ with
the compact open topology and ${\mathsf{K}}(\HH)$ denotes the compact
operators with the norm topology. There is a composition operation on $\Fred'(\HH)$ defined as
 \begin{align*}
 	\Fred'(\HH)\times\Fred'(\HH)&\xrightarrow{\circ}\Fred'(\HH)\\
 	\big((A,B),(A',B')\big)&\to (AA',B'B).
 	\end{align*}

\end{definition}

We denote by $\Hg=\HH\oplus \HH$  a $\IZ_2$-graded, infinite  dimensional  Hilbert space.

\begin{definition}

Let $\UU(\Hg)$ be  the  group  of  even, unitary  operators  on the  Hilbert  space $\Hg$ which  are  of  the  form 
$$\begin{pmatrix}  u& 0  \\ 0 & u      \end{pmatrix},$$

where $u$ denotes an element in $\UU(\HH)$. 

We  denote  by  
$P\UU(\Hg)$ the  group $U(\Hg)/ S^1$ and recall that there is a  central  extension 
$$1\to  S^1\to \UU(\Hg)\to P\UU(\Hg)\to 1.$$ 

\end{definition}

\begin{definition} The space $\Fred''(\Hg)$ is  the  space  of pairs $(\widehat{A},\widehat{B})$ of self-adjoint, bounded operators of degree 1 defined on $\Hg$ such that $\widehat{A}\widehat{B}-I$ and $\widehat{B}\widehat{A}-I$ are compact. Endow $\Fred''(\Hg)$
with the topology induced by the embedding \begin{eqnarray*}
	\Fred''(\Hg) & \to & \BB(\Hg) \times  {\BB}(\Hg)
	\times {\KK}(\Hg)
	\times {\KK}(\Hg) \\
	(\widehat{A},\widehat{B}) & \mapsto & (\widehat{A},\widehat{B},\widehat{A}\widehat{B}-1, \widehat{B}\widehat{A}-1)
\end{eqnarray*}
where ${\BB}(\Hg)$ denotes the bounded operators on $\Hg$ with
the compact open topology and ${\KK}(\Hg)$ denotes the compact
operators with the norm topology.
\end{definition}

The space $\Fred''(\Hg)$ is homeomorphic to $\Fred'(H)$, we fix the homeomorphism $f$ sending
$$\begin{pmatrix}
0&A\\A^*&0
\end{pmatrix}\mapsto A.$$

\begin{definition}
We denote by $\Fred^{(0)}(\Hg)$ the space of self-adjoint degree 1 Fredholm operators $\widehat{A}$ in $\Hg$ such that $\widehat{A}^2$ differs from the identity by a compact operator, with the topology coming from the embedding $\widehat{A}\mapsto (\widehat{A},\widehat{A}^2-I)$ in $\BB(\Hg)\times\KK(\Hg)$ (Here $\BB(\Hg)$ and $\KK(\Hg)$ have the compact open topology and the norm topology respectively). Note that there is a natural inclusion from $\Fred^{(0)}(\Hg)$ to $\Fred''(\Hg)$ defined as
\begin{align*}
	\Fred^{(0)}(\Hg)&\xrightarrow{i} \Fred''(\Hg)\\
\widehat{A}&\mapsto (\widehat{A},\widehat{A}).\end{align*}

\end{definition}

The  following  result  was  proved  in \cite{atiyahsegal}, Proposition 3.1 : 

\begin{proposition}
There is a deformation retract \begin{align*}\Fred''(\Hg)\xrightarrow{r}\Fred^{(0)}(\Hg).\end{align*}
\end{proposition}

  

The  above  discussion  can be  concluded by telling   that $\Fred^{(0)}(\Hg)$ is a representing space for $K$-theory. The group $\UU(\Hg)$ of degree 0 unitary operators on $\Hg$ with the compact open topology  acts continuously by conjugation on $\Fred^{(0)}(\Hg)$, therefore the same is true for the group $P\calu(\Hg)$. In \cite{barcenasespinozajoachimuribe} twisted $K$-theory for proper actions of discrete groups was defined using the representing space $\Fred '(\HH)$, but in order to have multiplicative structure we proceed using $\Fred^{(0)}(\Hg)$.

Let us choose the operator 
\begin{equation*}
  \widehat{I}=\begin{pmatrix}
  0&I\\I&0
  \end{pmatrix}.
  \end{equation*}

as the base point in $\Fred^{(0)}(\Hg)$.

Choosing  the identity  as  a  base point on the space  $\Fred^{'}(\mathcal{H})$,  we have a  diagram  of  pointed  maps

$$\xymatrix{\Fred^{(0)}(\Hg)\ar[r]^{i} &  \Fred^{''}(\Hg)\ar[d]^{r}\ar[r]^f & \Fred^{'}(\HH) \\ & \Fred^{(0)}(\Hg) &  } . $$

 Moreover,  the  above maps  are  compatible  with the conjugation  actions  of  the  group $\UU(\Hg)$ on $\Fred''(\Hg)$, respectively of the group $\UU(\HH)$  on $\Fred'(\HH)$ , and with the  map \begin{align*} \,\UU(\Hg)\ \   &\to \UU(\HH)\\
 	\begin{pmatrix}  u& 0  \\ 0 & u      \end{pmatrix}&\mapsto \ \ u.\end{align*}

Let $X$ be a proper cocompact  $G$-CW-complex and  let $P \to X$  be a projective unitary stable
$G$-equivariant bundle over $X$. Denote by $\widehat{P}$  the  fiber  bundle  $$P\times_{P\UU(\HH)}\mathbb{P}(\Hg).$$

 The space $\Fred^{(0)}(\Hg)$  is endowed with a continuous right action (by homeomorphism)
of the group $P\calu({\calh})$ by conjugation, therefore we can take
the associated bundle over $X$
$$\Fred^{(0)}(\widehat{P}) := \widehat{P} \times_{P\calu(\calh)} \Fred^ {(0)}(\widehat{\calh}),$$
  and with the induced $G$-action given by
 $$g \cdot [(\lambda, A))] := [(g \lambda,A)]$$
for $g\in G$, $\lambda\in\widehat{P}$ and $A\in\Fred^{(0)}(\widehat{\calh})$.

Denote by $$\Gamma(X; \Fred^{(0)}(\widehat{P}))$$ the space of sections of the
bundle $\Fred^{(0)}(\widehat{P}) \to X$ and choose as base point in this space the
section which chooses the base point $\widehat{I}$ on the fibers. This section
exists because the ${P}\calu(\calh)$ action on ${\widehat{I}}$
is trivial, and therefore $$X \cong \widehat{P}/P\calu(\calh) \cong \widehat{P}
\times_{P\calu(\calh)} \{{\widehat{I}} \} \subset \Fred^{(0)}(\widehat{P});$$
let us denote this  section by $s$.

\begin{definition} \label{definition K-theory of X,P}
  Let $X$ be a connected $G$-space and $P$ a projective unitary stable
  $G$-equivariant bundle over $X$. The {\it{Twisted $G$-equivariant
  K-theory}} groups of $X$ twisted by $P$ are defined as the  homotopy  groups  of  the  $G$-equivariant  sections
  $$K^{-p}_G(X;P) := \pi_p \left( \Gamma(X;\Fred^{(0)}(\widehat{P}))^G, s \right)$$
  where the base point $s= \widehat{I}$ is the section previously constructed.

\end{definition}

\subsection{Topologies  on the  space of  Fredholm  Operators}

In \cite{tuxustacks} a Fredholm picture of twisted K-theory is introduced, using  the strong$^*$-operator  topology  on the  space  of  Fredholm Operators. For  the  sake  of  completeness,  we  establish  here  the  isomorphism  of  these  twisted  equivariant  $K$-theory groups with  the  ones  described  here. 

Denote by   $\Fred'(\HH)_{s*}$ the space whose elements are the same as $\Fred'(\HH)$ but with the strong$^\ast$-topology on $B(\HH)$.

\begin{definition}\cite[Thm. 3.15]{tuxustacks} \label{definition K-theory Tu-Xu}
  Let $X$ be a connected $G$-space and $P$ a projective unitary stable
  $G$-equivariant bundle over $X$. The {\it{Twisted $G$-equivariant
  K-theory}} groups of $X$ (in the sense of Tu-Xu-Laurent) twisted by $P$ are defined as the  homotopy  groups  of  the  $G$-equivariant  strong${}^\ast$-continuous sections
  $$\IK^{-p}_G(X;P) := \pi_p \left( \Gamma(X;\Fred'({P})_{s^*})^G, s \right).$$
  The bundle $\Fred'({P})_{s^*}$ is defined in a similar way as $\Fred'({P})$.

\end{definition}
We observe that the functors $K_G^\ast(-,P)$ and $\IK_G^\ast(-,P)$ are naturally equivalent.
\begin{lemma}
  The spaces $\Fred'(\HH)$ and $\Fred'(\HH)_{s^*}$ are weakly homotopy equivalent as $PU(\HH)$-spaces.
\end{lemma}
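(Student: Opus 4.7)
The plan is to show that the set-theoretic identity on the underlying set of Fredholm pairs gives a continuous, $P\UU(\HH)$-equivariant map in one direction, and that this map is a weak homotopy equivalence. Continuity should follow by comparing the embedding used to define $\Fred'(\HH)$, namely into $B(\HH)_{c.o.}\times B(\HH)_{c.o.}\times K(\HH)\times K(\HH)$, with the strong${}^\ast$-operator data: on norm-bounded subsets the compact-open topology on $B(\HH)$ agrees with the strong${}^\ast$-topology, and the norm control on the compact errors $AB-I$ and $BA-I$ is identical in both descriptions. The $P\UU(\HH)$-equivariance of the comparison map is then automatic, because the compact-open and strong${}^\ast$-topologies coincide on $\UU(\HH)$ by \cite{espinozauribeunitaryoperators}, so conjugation by unitaries is continuous in either topology on the target.

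To establish the underlying weak equivalence I would argue that both spaces corepresent the same homotopy functor, namely topological $K$-theory: $\Fred'(\HH)$ does so via the Atiyah--J\"anich classification used in \cite{atiyahsegal}, and $\Fred'(\HH)_{s^\ast}$ does so by \cite[Thm.~3.15]{tuxustacks}. Under the comparison, the Fredholm index of a pair $(A,B)$, which is an algebraic invariant, is preserved, so $\pi_0$ is carried isomorphically onto $\IZ$. For higher homotopy groups one exploits that any continuous map from a finite CW complex into either space can be homotoped into a set of operators of uniformly bounded norm; on such bounded subsets of $B(\HH)$ the two topologies coincide, so spheres and null-homotopies transfer freely between the two descriptions.

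To promote the weak equivalence to a $P\UU(\HH)$-equivariant one, I would test fixed-point sets with respect to the closed subgroups that occur in local trivializations of projective unitary stable $G$-equivariant bundles, as in Definition \ref{def projective unitary G-equivariant stable bundle}. For such a subgroup $H\leq P\UU(\HH)$, the fixed set consists of Fredholm pairs commuting with a chosen lift $\widetilde{H}\to\UU(\HH)$, and the ambient Hilbert space decomposes into isotypical components indexed by the irreducibles of $\widetilde{H}$. This reduces the equivariant comparison to a countable product of ordinary (non-equivariant) Fredholm comparisons on the multiplicity Hilbert spaces, to which the previous step applies verbatim.

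The step I expect to be the main obstacle is the bounding argument sketched in the second paragraph: one must justify that every compact parameter family in $\Fred'(\HH)$, or in $\Fred'(\HH)_{s^\ast}$, can after homotopy be realized by uniformly norm-bounded families of operators. This is where the interplay between the norm topology on the compact remainders and the weaker topology on the bounded parts must be handled delicately. Once this boundedness is secured, the coincidence of compact-open and strong${}^\ast$ topologies on bounded subsets of $B(\HH)$ turns the passage between $\pi_\ast(\Fred'(\HH))$ and $\pi_\ast(\Fred'(\HH)_{s^\ast})$, and their $H$-fixed-point analogues, into a routine comparison.
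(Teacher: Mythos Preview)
Your approach takes a harder route than necessary and contains a concrete error in the topology comparison. The claim that on norm-bounded subsets the compact-open topology on ${\mathsf{B}}(\HH)$ agrees with the strong${}^\ast$-topology is false: on bounded sets the compact-open topology coincides with the \emph{strong} operator topology, and the strong${}^\ast$ topology is strictly finer there. For instance, the backward-shift powers $(S^\ast)^n$ converge to $0$ both strongly and in the compact-open topology, yet their adjoints $S^n$ are isometries and do not converge strongly to $0$, so $(S^\ast)^n \not\to 0$ in the strong${}^\ast$ sense. Thus the identity map from the compact-open model to the strong${}^\ast$ model is not continuous even after your bounding step, and the programme of transferring spheres and homotopies through an explicit comparison map breaks down precisely at the point you flagged as delicate.

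The paper sidesteps all of this by never producing a comparison map. Its argument is that $\Fred'(\HH)_{s^\ast}$ represents equivariant $K$-theory for exactly the same reason $\Fred'(\HH)$ does: the proof of \cite[Prop.~A.22]{atiyahsegal} goes through verbatim once one checks that the Kuiper-type contraction $h_t$ of $GL(\HH)$ from \cite[Prop.~A.21]{atiyahsegal} is continuous in the strong${}^\ast$-topology. Two representing objects for the same equivariant cohomology theory are then $P\UU(\HH)$-weakly equivalent, with no need to bound norms, compare topologies on ${\mathsf{B}}(\HH)$, or decompose fixed-point sets by hand. Your isotypical-decomposition idea for $H$-fixed sets and the index argument for $\pi_0$ are not wrong in spirit, but they are superseded by the single observation about the strong${}^\ast$-continuity of $h_t$.
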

\begin{proof}
  The strategy is to prove that $\Fred'(\HH)_{s^*}$ is a representing space of equivariant K-theory. The same proof for $\Fred'(\HH)$ in \cite[Prop. A.2.2]{atiyahsegal} applies. In particular $U(\HH)_{s^\ast}$ is equivariantly contractible because the homotopy $h_t$ constructed in \cite[Prop. A.2.1]{atiyahsegal} is continuous in the strong$^\ast$-topology and then the proof applies.
\end{proof}

Using the above lemma one can prove that the pointset identity map from  $\Fred'(\HH)$ to $\Fred'(\HH)_{s^*}$ defines an equivalence between (twisted) cohomology theories $K_G^\ast(-,P)$ and $\IK_G^\ast(-,P)$. Then we have that both definitions of twisted K-theory are equivalent. Summarizing

\begin{theorem}
For every proper $G$-CW-complex $X$ and every projective unitary stable $G$-equivariant bundle over $X$ we have an isomorphism
$$K^{-p}_G(X;P)\cong \IK^{-p}_G(X;P).$$
\end{theorem}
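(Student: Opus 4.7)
The plan is to combine the chain of deformation retracts and homeomorphisms already established between $\Fred^{(0)}(\Hg)$, $\Fred''(\Hg)$, and $\Fred'(\HH)$ with the preceding lemma comparing the compact-open and strong${}^*$ topologies on $\Fred'(\HH)$. Composing these, I obtain a $P\UU(\Hg)$-equivariant weak homotopy equivalence between the representing Fredholm space used in Definition \ref{definition K-theory of X,P} and the one used in Definition \ref{definition K-theory Tu-Xu}. Passing to associated bundles over $X$, this extends to a $G$-equivariant continuous map $\Fred^{(0)}(\widehat{P}) \to \Fred'(P)_{s^*}$ which is a fibrewise weak equivalence. Taking $G$-equivariant sections based at the canonical section $s$, one gets a natural transformation
$$\tau_X \colon K_G^{-p}(X;P) \longrightarrow \IK_G^{-p}(X;P).$$

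Next I would verify that both assignments $X \mapsto K_G^{*}(X;P)$ and $X \mapsto \IK_G^{*}(X;P)$ define $G$-equivariant twisted cohomology theories on the category of proper $G$-CW pairs, satisfying homotopy invariance, exactness for pairs, and additivity. For the first this is implicit in \cite{barcenasespinozajoachimuribe}; for the second it is established in \cite{tuxustacks}. The transformation $\tau$ is natural for pairs and compatible with the connecting homomorphisms (since both are induced by the cofibre-sequence structure on the section functor), so it is a morphism of twisted $G$-equivariant cohomology theories.

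I would then invoke the standard comparison principle: a morphism of equivariant cohomology theories on proper $G$-CW complexes that is an isomorphism on orbits $G/H$ with $H$ finite is an isomorphism everywhere, via Mayer--Vietoris and the five-lemma inducted over the cellular filtration, followed by a $\lim^1$ and colimit argument for infinite complexes. Over an orbit $G/H$, the $G$-equivariant sections of either bundle are identified with the $H$-fixed points of the fibre under the stable homomorphism $f_x \colon H \to P\UU(\Hg)$, and the fibrewise comparison map is $P\UU(\Hg)$-equivariant, hence $H$-equivariant through $f_x$. Passing to $H$-fixed points therefore yields the sought isomorphism on orbits, and the comparison theorem closes the argument.

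The hard part will be making rigorous the passage to $H$-fixed points of the $P\UU(\Hg)$-equivariant weak equivalence, since the cited lemma is stated only at the level of the total group action. This requires revisiting the contraction $h_t$ of $GL(\HH)_{s^*}$ from \cite{atiyahsegal} used in the lemma's proof and observing that it is equivariant under the relevant compact isotropy, together with using the stability of $f_x$ to ensure that $\Fred^{(0)}(\widehat{\Hg})^{H}$ and $\Fred'(\HH)^{H}_{s^*}$ both represent $H$-equivariant $K$-theory on a point (so that the induced map is a weak equivalence of classifying spaces for the finite group $H$). Once this fixed-point comparison is in place, the inductive assembly and the final colimit are routine.
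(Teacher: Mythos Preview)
Your proposal is correct and follows essentially the same approach as the paper, which in fact gives only a one-sentence sketch: the preceding $P\UU(\HH)$-equivariant weak homotopy equivalence of Fredholm spaces induces a natural transformation of twisted equivariant cohomology theories, hence an isomorphism. Your outline supplies precisely the details---the orbit-by-orbit verification via the comparison principle and the passage to $H$-fixed points---that the paper leaves to the reader.
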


\subsection{Additive structure}
There  exists  a  natural  map  
$$\Gamma(X;\Fred^{(0)}(\widehat{P}))^G \times \Gamma(X;\Fred^{(0)}(\widehat{P}))^G \to \Gamma(X;\Fred^{(0)}(\widehat{P}))^G, $$ 
 inducing  an  abelian  group  structure  on  the twisted  equivariant  $K$- theory  groups,  which   we  will  define  below. Define $a$ as the composition of the following maps
 
 $$\Fred^{(0)}(\Hg) \times \Fred^{(0)}(\Hg) \xrightarrow{f\circ i }\Fred^{'}(\HH) \times \Fred^{'}(\HH)\xrightarrow{\circ}\Fred^{'}(\HH)\xrightarrow{r\circ f^{-1}} \Fred^{(0)}(\Hg)$$
 Note that in $\Fred'(\HH)$ the composition is well defined because the grading is not  present.
As  the  maps  involved  in the  diagram  are  compatible  with the  conjugation  actions  of  the   groups $\UU(\Hg)_{c.g}$, respectively $ \UU(\HH)_{c.g}$ and  $G$,  for  any  projective  unitary, stable  $G$-equivariant  bundle  $P$, this  induces  a pointed   map 

$$\Gamma(X;\Fred^{(0)}(\widehat{P}))^G, s) \times (\Gamma(X;\Fred^{(0)}(\widehat{P}))^G, s )\xrightarrow{\ \widetilde{a}\ }(\Gamma(X;\Fred^{(0)}(\widehat{P}))^G, s) .$$
Which  defines  an additive structure  in  $K^{-p}_G(X;P)$.

\subsection{Multiplicative structure}\label{sectionmultiplicative}
We  define an associative  product on twisted K-theory.

$$K^{-p}_G(X;P)\times K^{-q}_G(X;P')\rightarrow K^{-(p+q)}_G(X;P\otimes P')$$

induced by the map 
$$(A,A')\mapsto A\widehat{\otimes} I+I\widehat{\otimes} A'$$

defined in $\Fred^{0}({\Hg})$, and  $\widehat{\otimes}$ denotes  the  graded  tensor  product, see  \cite{atiyahsinger}  on pages  24-25 for  more details. We denote this product by $\bullet$.

Now we will describe how to endow to the graded group $K^*_G(X,P)$ with a $K^*_G(X)$-module structure and how to define the \emph{trivial} $G$-equivariant projective unitary bundle.
\begin{definition}[Def. 3.1 in \cite{refrito}]
	Let  $\alpha\in Z^ {2}(G,S^1)$  be a  normalized  torsion cocycle  of order  $n$   for  the  discrete  group  $G$,  with associated  central  extension  
	$$0\to  \mathbb{Z}/{n} \to  G_{\alpha}\to  G.$$  An  \textit{$\alpha$-twisted  vector  bundle}  is  a  finite  dimensional $G_{\alpha}$-equivariant  complex vector bundle  such  that  $\mathbb{Z}/n$  acts  by  multiplication  by a  primitive $n$-th root  of  unity.  The  \textit{$\alpha$-twisted, $G$-equivariant  K-theory}  groups $^{ \alpha }{K}^{0}_{G}( X)$ are  defined  as   the  Grothendieck  groups  of  the  isomorphism classes of  $\alpha$-twisted  vector  bundles over  $X$.
	
\end{definition}
Given  a proper  $G$-CW  complex $X$,  define $^{\alpha}K^{-n}_{G}(X)$  as  the  kernel  of  the  induced  map 

$$ ^{ \alpha }{K}^{0}_{G}( X\times S^{n})   \overset{{\rm incl}^{*}}{\to}       {  ^{ \alpha }{K}^{0}_{G}( X)}. $$

In Section 6.4 in \cite{barcenasespinozauribevelasquez} is described a method to assign to a normalized torsion cocycle $\alpha\in Z^2(G;S^1)$ a projective unitary, stable $G$-equivariant bundle $P_\alpha$ in such way that the groups ${}^\alpha K_G^*(X)$ are isomorphic to the groups $K_G^*(X;P_\alpha)$.

Let $\mathbf{0}$  be  the  projective  unitary, stable  $G$-equivariant  bundle  associated  to the trivial cocycle in $Z^2(G,S^1)$. By results in \cite{refrito} the  groups  $\pi_*\big (\Gamma(X;\Fred^{(0)}(\mathbf{0})\big )^G$ are canonically isomorphic to  \emph{untwisted}, equivariant,  representable $K$-Theory in  negative  degree   for  proper  actions. The  extended  version  via  Bott  periodicity are canonically isomorphic  with  usual \emph{untwisted}, equivariant  K-theory  groups for proper $G$-CW  complexes defined in \cite{lueckoliverbundles}.

 Now considering the external product defined above and the trivial twisting $\mathbf{0}$ we have a map
 
 $$K_G^{-p}(X;\mathbf{0})\times K^{-q}_G(X;P)\to K^{-(p+q)}_G(X;\mathbf{0}\otimes P). $$

But as we noted in the proof of Prop. \ref{propcohomology} tensor product of projective unitary, stable $G$-equivariant bundles corresponds when we take isomorphism classes to the addition in $H^3(X\times_GEG;\IZ)$ and $\mathbf{0}$ corresponds to the neutral element in $H^3(X\times_GEG;\IZ)$, then $P\otimes\mathbf{0}$ is isomorphic to $P$. Moreover by Prop. 3.7 in \cite{barcenascarrillovelasquez} the external product does not depends on the isomorphism classes of $P$ or $\mathbf{0}$, then we can conclude that we have a well defined module structure

$$K_G^{-p}(X)\times K_G^{-q}(X;P)\to K_G^{-(p+q)}(X;P).$$
 
 
\subsection{Bredon  Cohomology  and its \v{C}ech  Version}

(Untwisted) Bredon  cohomology  has  been  a useful  tool  to  approximate  equivariant  cohomology  theories  with  the  use  of  spectral  sequences  of  Atiyah-Hirzebruch  type  \cite{lueckdavis}, \cite{barcenasespinozauribevelasquez}. 

We will  recall a  version  of  Bredon  cohomology with  local  coefficients   which was  introduced  in  \cite{barcenasespinozauribevelasquez} and  compared there to  other  approaches.  
These  approaches  fit all  into  the  general  approach  of   spaces  over  a  category \cite{lueckdavis}, \cite{barcenasbrownrepresentability}. 

Let  $\calu= \{ U_\sigma \mid  \sigma \in  I \}$  be  an   open  cover  of  the  proper $G$-CW  complex  $X$ which  is  closed  under  intersections  and  has  the  property  that  each  open  set  $U_\sigma$   is   $G$-equivariantly homotopic  to  an  orbit $G/H_\sigma\subset U_\sigma$  for a  finite  subgroup  $H_\sigma$.
The  existence  of  such  a  cover,  sometimes  known  as  \emph  {contractible  slice  cover},   is guaranteed  for  proper  $G$-ANR's  by an appropriate  version  of  the  slice  Theorem (see \cite{antonyan}).

\begin{definition}

Denote  by $\cover$ the  category with  objects $\calu$  and where a  morphism  is  given  by an   inclusion $U_\sigma \to U_\tau$ . 
A  twisted  coefficient  system   with values  on  $R$-Modules   is  a  contravariant    functor  $\cover\to  R-{\rm Mod}$. 

\end{definition}

\begin{definition}
Let  $X$  be  a  proper  $G$-space with  a  contractible slice  cover  $\mathcal{U}$, and  let  $M$  be  a twisted  coefficient  system. Define  the Bredon  equivariant  homology  groups  with  respect  to $\mathcal{U}$  as  the  homology  groups   of  the category $\cover$ with coefficients in $M$,  

$$H^{n}_G(X,\mathcal{U}; M):= H^{n}(\cover, M).$$
These  are  the  homology  groups  of  the  chain  complex defined  as  the  $R$-module 
$$ C_{*}^\mathbb{Z}(\cover)\otimes_{\cover} M, $$ 

given  as  the  balanced tensor  product  of   the  contravariant, free  $\mathbb{Z}\cover$-chain  complex  $C_{*}^\mathbb{Z}(\cover)$ and  $M$. This  is  the  $R$-module  
$$\underset{U_\sigma \in \cover}\bigoplus R\otimes_R M(U_\sigma)/K $$
where  $K$  is  the  $R$-module  generated  by  elements  
$$ r\otimes x- r\otimes i^*(x), $$ 
for  an  inclusion  $i: U_\sigma\to  U_\tau$. 
\end{definition}

\begin{remark}[Coefficients of  twisted  equivariant  $K$-Theory on  contractible  covers] \label{remarkcoefficients}
 
 Let $i_\sigma: G/H_\sigma\to  U_\sigma \to X$ be  the inclusion of  a $G$-orbit into $X$ and  consider  the  Borel  cohomology  group  $H^3( EG\times _G G/H_\sigma, \IZ )$. Given a class $ [P] \in  H^3(EG \times_G X, \IZ)$, we have that $$i^*([P])\in H^3(EG\times_GU_\sigma,\IZ)\cong H^3(BH_\sigma,\IZ)\cong H^2(BH_\sigma,S^1).$$ Then to $i^*([P])$ we can associate a $S^1$-central extension of $H_\sigma$   $$1\to S^1 \to \widetilde{H_{P_\sigma}} \to  H_\sigma \to  1 $$ with $$K_G^*(U_\sigma,P)\cong{}^{i^*([P])}K_G^*(U_\sigma).$$

Restricting  the   functors  $\ktheory{G}{0}{X}$ and $\ktheory{G}{1}{X}$ to  the  subsets $U_\sigma$ gives contravariant  functors  defined  on the  category $\cover$. 

In \cite{refrito} is proved that the groups ${}^{i^*([P])}K_G^*(U_\sigma)$ satisfy:

$${}^{i^*([P])}K_{G}^{j}(U_\sigma) = \begin{cases} R_{S^{1}}(\widetilde{H_{P_\sigma}})&  \text{if } j= 0   \\  0 & \text{if }  j= 1    \end{cases}$$

The  symbol $R_{S^{1}}(\widetilde{H_{P_\sigma}})$ denotes  the  subgroup  of  the  abelian  group  of  isomorphisms  classes of complex $\widetilde{H_{P_{\sigma}}}$-representations,  where  $S^1$  acts  by  complex  multiplication. 

\end{remark}
 
We  recall  the  key result  from  \cite{barcenasespinozauribevelasquez}, proposition  4.2 
\begin{proposition}\label{propositionspectralsequence}
The spectral sequence
associated to the locally finite and equivariantly contractible cover $\mathcal{U}$ and converging to  $\ktheory{G}{*}{X} $, has for second page $E_2^{p,q}$ the cohomology of $\cover$
with coefficients in the functor $\mathcal{K}^0_G( ?,P|_?)$ whenever $q$ is even, i.e.
\begin{equation}\label{spseq}
E_{2}^{p,q}:= 
H^{p}_G(X,\mathcal{U}; \mathcal{K}^0_G( ?,P|_{?})) 
\end{equation}
and is trivial if $q$ is odd.
Its higher differentials
$$d_{r}:E_{r}^{p, q}\to  E_{r}^{p+r, q-r+1}$$
vanish  for    $r$ even.
\end{proposition}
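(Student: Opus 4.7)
The plan is to build a Čech-type spectral sequence from the cover $\mathcal{U}$, exploiting the Mayer--Vietoris property of twisted equivariant $K$-theory. Since sections of the Fredholm bundle $\Fred^{(0)}(\widehat{P})$ glue and restrict, the functor $K^*_G(-,P|_-)$ satisfies Mayer--Vietoris on open subsets. Regarding $\cover$ as a poset (it is closed under intersections), one forms the double complex whose $(p,q)$-term is $\prod K^q_G(U_{\sigma_0},P|_{U_{\sigma_0}})$ indexed over chains $\sigma_0\to\cdots\to\sigma_p$ of strict inclusions, equivalently the $\bfE^2$ page of the hypercohomology of the functor $\mathcal{K}^0_G(?,P|_?)$ via the bar resolution of $\IZ\cover$. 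By general nonsense this gives a conditionally convergent spectral sequence with abutment $K^*_G(X,P)$.

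The second step is to identify the $E_1$ page. By hypothesis each $U_\sigma$ is $G$-equivariantly homotopy equivalent to the orbit $G/H_\sigma$. Twisted equivariant $K$-theory is $G$-homotopy invariant, so restriction along $G/H_\sigma\hookrightarrow U_\sigma$ gives an isomorphism $K^q_G(U_\sigma,P|_{U_\sigma})\cong K^q_{H_\sigma}(\mathrm{pt},\alpha_\sigma)$, where $\alpha_\sigma$ is the pullback twist. The computation recorded in Remark \ref{remarkcoefficients} (via the induction/restriction identification $K^*_G(G/H,P)\cong K^*_H(\mathrm{pt},P|_{G/H})$ and the interpretation of $H^3(BH,\IZ)\cong H^2(BH,S^1)$ as central extensions) then identifies this group with $R_{S^1}(\widetilde{H_{P_\sigma}})$ for $q$ even and with $0$ for $q$ odd.

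Third, I would identify the $E_1$ differential with the Bredon coboundary. By construction the $E_1$ differential is the alternating sum of the restriction maps induced by inclusions $U_\sigma\subset U_\tau$; these are exactly the structure maps of the contravariant functor $\mathcal{K}^0_G(?,P|_?)$ on $\cover$. Hence the $E_1$ column for even $q$ is the cochain complex computing $H^*(\cover,\mathcal{K}^0_G(?,P|_?))$, while the odd-$q$ rows vanish. This gives the claimed formula for $E_2^{p,q}$. The vanishing of even $d_r$ is then a parity argument: if $q$ is odd the source already vanishes, and if $q$ is even and $r$ is even, then $q-r+1$ is odd, so the target $E_r^{p+r,q-r+1}$ vanishes.

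The main obstacle is the careful comparison between two a priori distinct incarnations of the restriction data: the geometric restriction of Fredholm sections on $U_\tau$ to $U_\sigma$ that furnishes the $E_1$ differential, versus the algebraic restriction of representations of the central extensions $\widetilde{H_{P_\sigma}}$ that drives the Bredon coboundary. One has to check that, under the contractibility equivalences $U_\sigma\simeq G/H_\sigma$ and the identification in Remark \ref{remarkcoefficients}, the two agree compatibly up to the coherences that relate $\omega_\sigma$ with $\omega_\tau$ through the inclusion morphism in $\cover$; this is essentially a naturality statement for the classification of twists, but it must be verified explicitly on the level of central extensions to avoid sign or cocycle ambiguities.
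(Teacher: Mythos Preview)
The paper does not supply a proof of this proposition; it is explicitly \emph{recalled} from \cite{barcenasespinozauribevelasquez}, Proposition~4.2, and stated here without argument. There is therefore nothing in the present paper to compare your attempt against beyond the citation.

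That said, your sketch is the standard route and is essentially what is carried out in the cited reference: one filters by the \v{C}ech nerve of the contractible slice cover, uses the Mayer--Vietoris/excision property of the Fredholm-section model to obtain convergence, and identifies the $E_1$-page via the equivalences $U_\sigma\simeq G/H_\sigma$ together with the computation in Remark~\ref{remarkcoefficients}. Your parity argument for the vanishing of $d_r$ with $r$ even is correct as stated. The one point you flag yourself---the compatibility of the geometric restriction maps on Fredholm sections with the representation-theoretic restriction maps between the groups $R_{S^1}(\widetilde{H_{P_\sigma}})$---is indeed the only place where real work is needed, and it is handled in \cite{barcenasespinozauribevelasquez} by tracking the local trivializations in Definition~\ref{def projective unitary G-equivariant stable bundle} through the classification of twists; your description of what must be checked is accurate.
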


\section{Module  Structure  for  twisted  Equivariant  K-theory }\label{sectionmodule}

Let  $X$ be  a  proper  $G$-CW complex,  and let  $P$  be  a stable  projective  unitary  $G$-equivariant  bundle  over  $X$.   Recall  that  up  to $G$-equivariant  homotopy,  there  exists  a  unique  map  $\lambda: X\to \eub{G}$. The  map  $\lambda^*$  together  with the module structure defined in Section 1 endows to $K^{0}_G(X,P)$ with  a $K_G^0(\eub{G})$-module structure 
$$K^0_G(\eub{G})\times K_G^0(X,P)\xrightarrow{\lambda^*\times id} K_G^0(X)\times K_G^0(X,P)\xrightarrow{\bullet} K^{0}_G(X,P).$$

We  will  analyze  the  structure  of    $K^0_G(\eub{G})$ as  a  ring. The  results  in  the  following  proposition are proved  inside  the  proofs of Theorem 4.3,  page  610 in \cite{lueckoliverbundles},  and Theorem 6.5, page 21 in \cite{lueckolivergamma}.  

\begin{proposition}\label{propositionnoetherianring}
Let  $G$  be  a  group  which  admits  a  finite  model    for  the  classifying space  for  proper  actions $\eub{G}$. Then,   
\begin{enumerate}
\item $K_G^0(\eub{G})$  is  isomorphic  to  the  Grothendieck group  of isomorphism classes $G$-equivariant,  finite  dimensional complex $G$-vector bundles. 
\item The  ring  $K^0_G(\eub{G})$ is  Noetherian
\item  Let  $\OrGF{G}{\mathcal{FIN}}$ be  the  orbit  category  consisting  of   homogeneous  spaces $G/H$  with  $H$ finite   and  $G$-equivariant  maps. Denote  by  $R(?)$  the  contravariant $\OrGF{G}{\mathcal{FIN}}$-module given  by  assigning  to an  object $G/H$ the  complex  representation  ring $R(H)$ and  to a  morphism $G/H\to G/K$  the restriction $R(K)\to  R(H)$. Then,  there  exists a ring homomorphism 
$$K_G^0(\eub{G})\to \lim_{\OrGF{G}{\mathcal{FIN}}}R(?)$$
which  has  nilpotent  kernel  and  cokernel. 

\item  Given a prime  number  $p$, there  exists a vector bundle $E$ of  dimension prime  to  $p$,  such  that  for  every point  $x\in \eub{G}$, the  character  of  the  $G_x$  representation $E\mid_x$  evaluated  at  an  element  of  order not  a  power of p is 0. 

\end{enumerate}

\end{proposition}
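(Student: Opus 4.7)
For part (i), the plan is to invoke the classical identification between representable equivariant $K$-theory (defined here via sections of the Fredholm bundle) and the Grothendieck group of $G$-equivariant finite-dimensional complex vector bundles, valid on finite proper $G$-CW complexes. Since a finite model $\eub{G}$ is available and $\mathrm{Fred}^{(0)}(\widehat{\mathcal{H}})$ with the trivial twist classifies untwisted equivariant $K$-theory, any $G$-equivariant section is represented up to homotopy by a formal difference of finite-dimensional $G$-bundles via the usual eigenspace decomposition of a self-adjoint Fredholm family; this is the extension of Phillips' theorem already used in the twisted setting via Theorem 3.8 of \cite{emersonmeyer}.

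For part (iii), I would define the comparison homomorphism $\Phi \colon K_G^0(\eub{G}) \to \lim_{\OrGF{G}{\mathcal{FIN}}} R(?)$ by sending a class $[E]$ to the system of restrictions $[E|_{G/H}]$, viewed as a genuine complex $H$-representation for each finite subgroup $H$ occurring as an isotropy group of $\eub{G}$. The naturality with respect to subconjugation is automatic, and the additive and multiplicative structures are respected. To show that $\ker \Phi$ and $\operatorname{coker} \Phi$ consist of nilpotent elements, I would apply the equivariant Atiyah--Hirzebruch spectral sequence of Proposition \ref{propositionspectralsequence} to $\eub{G}$ with the trivial twist: using a contractible slice cover $\mathcal{U}$ of the finite model, the second page is Bredon cohomology of $\cover$ with coefficients in the representation-ring functor $R(?)$, and by finiteness of $\eub{G}$ only finitely many orbit types appear and the total cohomological degree is bounded. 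The map $\Phi$ is the edge homomorphism onto $E_\infty^{0,0}$, so $\ker \Phi$ is the filtration $F^{1}K_G^0(\eub{G})$ and $\operatorname{coker} \Phi$ is captured by $E_\infty^{p,-p}$ for $p \geq 1$. Multiplicativity of the spectral sequence then forces $F^{1}$ to be nilpotent (a product of $N+1$ elements of $F^{1}$ lies in $F^{N+1}$, which vanishes once $N$ exceeds the Bredon cohomological dimension of $\eub{G}$), handling the kernel; the cokernel is handled symmetrically by lifting Bredon $0$-cocycles after clearing higher differentials.

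For part (ii), the limit $\lim_{\OrGF{G}{\mathcal{FIN}}} R(?)$ is a finite inverse limit, since a finite model for $\eub{G}$ has only finitely many isomorphism classes of finite-isotropy orbits. Each $R(H)$ is Noetherian because $H$ is finite (Segal), so the finite limit is Noetherian. Combining this with (iii), a standard lemma in commutative algebra shows that if $\Phi \colon A \to B$ is a ring map with nilpotent kernel and nilpotent cokernel, then $A$ is Noetherian whenever $B$ is: any ascending chain of ideals in $A$ maps to a chain in $B$ which stabilises, and the kernel being nilpotent means only finitely many corrections are needed at each step to recover the chain in $A$.

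For part (iv), I would assemble the bundle orbitwise using Brauer's induction theorem: in each $R(H)$ the class $1 - \rho_H$, where $\rho_H$ is the sum of inductions from $p$-hyperelementary subgroups chosen with total dimension $\equiv 1 \bmod p$, has character vanishing on elements whose order is not a power of $p$, and has dimension prime to $p$. Compatible choices define an element of $\lim R(?)$; by (iii) this is hit modulo a nilpotent error by a class in $K_G^0(\eub{G})$, and a standard modification (replacing $[E]$ by a suitable power or multiple) corrects both the dimension and the vanishing condition. The main obstacle will be the nilpotence statement in (iii): the multiplicative structure of the spectral sequence of \cite{barcenasespinozauribevelasquez} must be established in sufficient strength (products raise filtration additively) to conclude nilpotence, and one must verify that the Bredon cohomological dimension is genuinely bounded by the cellular dimension of the finite model $\eub{G}$, which is where the finite-type hypothesis is consumed.
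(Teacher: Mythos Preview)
Your treatment of parts (i) and (iii) aligns with the paper's: both identify the comparison map with the edge homomorphism of the equivariant Atiyah--Hirzebruch spectral sequence, and the nilpotence of the kernel comes from the bounded multiplicative filtration exactly as you describe (the paper phrases this as ``rational collapse,'' but the mechanism is the same).

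There is a genuine gap in your argument for (ii). The ``standard lemma'' you invoke is false: a ring homomorphism $\Phi\colon A\to B$ with nilpotent kernel and nilpotent (even zero) cokernel does \emph{not} force $A$ to be Noetherian when $B$ is. Take $A=k\oplus V$ for a field $k$ and an infinite-dimensional $k$-vector space $V$, with multiplication $V\cdot V=0$; the projection $A\to k$ has square-zero kernel $V$ and zero cokernel, $k$ is Noetherian, yet the subspaces of $V$ give an infinite strictly ascending chain of ideals in $A$. Your chain-stabilisation sketch fails precisely here: knowing that $\Phi(J_n)$ stabilises only controls $J_n$ modulo $\ker\Phi$, and the nilpotent kernel can still hide infinitely many distinct ideals. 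The paper bypasses this entirely by observing that the $E_2$-page consists of Bredon cohomology groups of a \emph{finite} $G$-CW complex with coefficients in the finitely generated functor $R(?)$, so each $E_2^{p,q}$ is a finitely generated abelian group; hence $K_G^0(\eub{G})$ itself is finitely generated as an abelian group, and any commutative ring that is finitely generated as a $\mathbb{Z}$-module is Noetherian. You should replace your transfer argument with this direct finiteness.

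For part (iv) the paper's construction is more explicit and cleaner than your Brauer-induction sketch: it lets $m$ be the least common multiple of the orders of the isotropy groups, embeds each $H$ into $\Sigma_m$ via a free $H$-action on $\{1,\dots,m\}$, and pulls back the permutation representation of $\Sigma_m$ on $\Sigma_m/\mathrm{Syl}_p(\Sigma_m)$. This directly yields a compatible family in $\lim R(?)$ whose characters vanish on elements of order not a $p$-power and whose dimension is prime to $p$; part (iii) then lifts a tensor power to an actual bundle. Your formula ``$1-\rho_H$'' does not have the right dimension (it would be $1-\dim\rho_H$), and the coset-permutation construction avoids this bookkeeping entirely.
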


\begin{proof}
\begin{enumerate}
\item This  is  proved in  \cite{lueckoliverbundles}, \cite{phillipsbook}, \cite{emersonmeyer}, 3.8 on pages 8-9. 
\item Given  a  finite  proper  $G$-CW  complex $X$, there  exists  an  equivariant  Atiyah-Hirzebruch  spectral  sequence converging  to  $K^*_G(X)$ with  $E_2$ term  given  by $E_2^{p,q}=\bredoncell{p}{K^q(G/?)}{X}$, where  the  right  hand  side  denotes \emph{untwisted} Bredon  cohomology, defined  over  the  Orbit  Category  ${\rm Or}_{\calfin}(G)$  rather than over  the   category  $\cover$.


Since  the  Bredon  cohomology  groups  in the $E_2$-term of  the  spectral sequence are finitely  generated  as abelian groups if  $\eub{G}$  is a   finite  $G$-CW  complex,  this proves that the limit is also finitely generated as abelian group then it is automatically Noetherian.

\item The  edge homomorphism  of  the Atiyah-Hirzebruch  spectral  sequence  of  \cite{lueckdavis}  gives a  ring  homomorphism  
$K_G^0(\underbar{E}G)\to \bredoncell{0}{R^?}{\underbar{E}G}$. The  right  hand  side  can be identified  with the  ring  $\lim_{\OrGF{G}{\mathcal{FIN}}}R(?)$ and the edge map can be described as the restriction to the $0$-skeleton of $X$ under the identification 

$$\bredoncell{p}{R^?}{X}=\im\left(K_G(X^{(1)})\to K_G(X^{(0)})\right).$$

Then kernel of edge homomorphism is just $$\ker\left(K_G(X)\xrightarrow{\res} K_G(X^{(0)})\right),$$but Lemma 6.4 in \cite{lueckolivergamma} proves that this ideal is nilpotent. 

On the other hand in Prop. 6.1 in \cite{lueckolivergamma} is proved that for $\xi\in H^0(X,R^?)$ there is a $k>0$ such that $\xi^k$ is in the image of the edge morphism, then the cokernel is nilpotent.

\item 
 It is just Corollary 2.9 in \cite{lueckoliverbundles}
\end{enumerate}

\end{proof}

\begin{lemma}\label{propositionnoetherianmodule}

 Let $G$ be a  discrete  group admiting  a  finite  model  for  $\eub{G}$  and  $P$  be  a  stable projective unitary  $G$-bundle over  a  finite $G$-CW complex $X$. Then,  the $K_G^0(\eub{G})$-modules $K_G^i(X, P)$ are  noetherian for  $i=0,1$.

\end{lemma}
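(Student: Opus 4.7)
The plan is to use the Bredon spectral sequence of Proposition \ref{propositionspectralsequence} and run a filtration argument, exploiting the fact that a finitely generated module over a noetherian ring is noetherian.

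First I would observe that the $K_G^0(\eub{G})$-module structure on $K_G^*(X,P)$ extends to the full spectral sequence. Indeed, for each $U_\sigma \in \mathcal{U}$, composing the inclusion $U_\sigma \hookrightarrow X$ with the (essentially unique) classifying map $\lambda: X \to \eub{G}$ and applying the multiplicative structure of Section \ref{sectionmultiplicative} gives $K_G^0(\eub{G})$-module structures on each $K_G^0(U_\sigma, P|_{U_\sigma})$; these assemble to a morphism of $\cover$-modules, so the Bredon groups on the $E_2$-page inherit such a structure. By naturality of the cup product with respect to filtrations, this is a spectral sequence of $K_G^0(\eub{G})$-modules, and the filtration of $K_G^*(X,P)$ to which it converges is a filtration by $K_G^0(\eub{G})$-submodules.

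Second, I would argue that each $E_2^{p,q}$ is \emph{finitely generated as an abelian group}, and therefore a fortiori a finitely generated (hence noetherian, by Proposition \ref{propositionnoetherianring}) $K_G^0(\eub{G})$-module. Since $X$ is a finite proper $G$-CW complex, it admits a finite contractible slice cover $\mathcal{U}$, so the Bredon cochain complex computing $H^p_G(X,\mathcal{U}; K_G^0(?,P|_?))$ is concentrated in finitely many degrees and is a finite direct sum, in each degree, of terms of the form $K_G^0(U_\sigma,P|_{U_\sigma})$. By Remark \ref{remarkcoefficients} each such term equals $R_{S^1}(\widetilde{H_{P_\sigma}})$, which is a finitely generated abelian group because $H_\sigma$ is finite and we are selecting a single $S^1$-isotypical summand of $R(\widetilde{H_{P_\sigma}})$. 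So each cochain group is finitely generated, and cohomology of such a complex is finitely generated as well.

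Third, I would conclude by the standard subquotient argument. Because $X$ is finite, only finitely many columns $E_r^{p,*}$ are nonzero, so the spectral sequence collapses at a finite page. Over a noetherian ring, submodules and quotients of noetherian modules are noetherian, so every $E_r^{p,q}$ for $r \geq 2$ is noetherian, and in particular so is $E_\infty^{p,q}$. Then $K_G^i(X,P)$ admits a finite filtration by $K_G^0(\eub{G})$-submodules with noetherian associated graded pieces, and a finite extension of noetherian modules is noetherian; this gives the claim for $i=0,1$.

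The only step I expect to require genuine care is verifying that the spectral sequence is really one of $K_G^0(\eub{G})$-modules—concretely, that the differentials $d_r$ commute with multiplication by $K_G^0(\eub{G})$. This is a standard consequence of the compatibility of the \v{C}ech-type filtration on $\Gamma(X;\Fred^{(0)}(\widehat{P}))^G$ with the $\bullet$-product of Section \ref{sectionmultiplicative} (the product of an element of filtration $0$ with an element of filtration $p$ lies in filtration $p$), but since it was not spelled out in Proposition \ref{propositionspectralsequence} it must be checked explicitly; once established, the counting argument above goes through without further difficulty.
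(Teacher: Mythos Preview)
Your proof is correct and follows the same spectral-sequence strategy as the paper, but you are doing more work than necessary. The paper does not verify that the spectral sequence is one of $K_G^0(\eub{G})$-modules at all: it simply runs your second step at the level of abelian groups, observing that subquotients of finitely generated abelian groups are finitely generated, so each $E_r^{p,q}$ and hence $E_\infty^{p,q}$ is a finitely generated abelian group; since only finitely many are nonzero, $K_G^i(X,P)$ is itself finitely generated as an abelian group. The module structure enters only at the last line: any abelian group that is finitely generated over $\mathbb{Z}$ is a fortiori finitely generated as a module over any unital ring acting on it, and finitely generated modules over the noetherian ring $K_G^0(\eub{G})$ are noetherian. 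This sidesteps entirely the compatibility of $d_r$ with the $\bullet$-product that you flag as needing care; that compatibility is true, but you do not need it here.
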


\begin{proof}
It is proved in Thm. 5.3 in \cite{barcenasespinozauribevelasquez} that there is a  spectral  sequence converging  to $K_G^*(X,P)$. Its  $E_2$  term  consists  of  groups which  can  be  identified with  a version of  Bredon  cohomology  associated  to  an open, $G$-invariant cover $\mathcal{U}$  consisting  of open sets  wich  are  $G$-homotopy  equivalent  to proper  orbits.  

These  groups  are  denoted  by $\bredoncover{p}{K^q_G(\mathcal{U}}{X})$  and  are  zero   if  q is odd. Since $X$  is a  proper, $G$-compact, $G$-CW complex, the  cover can  be  assumed  to be  finite.  Given an  element  of  the  cover $U$, the group $K_G^{0}(U)$ is a finitely  generated,  free  abelian group, as  it  is  seen  from  A.3.4, page  40 in  \cite{barcenasespinozajoachimuribe},  where  the  groups $K_G^{0}(U)$ are identified  with  groups of projective  complex  representations. Compare  also  remark \ref{remarkcoefficients}. 

In particular the  groups $\bredoncover{p}{K^q_G(\mathcal{U}}{X})$  in the   spectral  sequence  converging  to $K_G^*(X,P)$ are  finitely  generated. By  induction,  the groups  $E_r^{p,q}$ are  finitely  generated  for  all  $r$  and  hence  the  term  $E_\infty$. Hence $K_G^i(X,P)$ is it  for  $i=0,1$. Since  $K_G^0(\eub{G})$  is  a noetherian  ring,  the  result follows. 
\end{proof}

\section{The  completion Theorem} \label{sectionatiyahsegal}

\begin{definition}[Augmentation ideal]
Let  $G$  be  a  discrete  group. Given  a proper  $G$-CW  complex, the augmentation ideal the augmentation ideal ${\bf I}_{G,X}\subseteq K_G(X)$ is defined to be the set of elements represented by virtual $G$-vector bundles of dimension zero on all connected components. In other words,
$${\bf I}_{G,X}=\ker\left(K_G(X)\xrightarrow{\dim}\prod_{\pi_0(X)/G}\IZ\right)$$

\end{definition}

\begin{proposition}\label{propositionpowers}
Let  $X$  be  an n-dimensional  proper  $G$-CW complex. Then, any product  of n+1  elements in $\idealGX{G}{X}$  is  zero.  
\end{proposition}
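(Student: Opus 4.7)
My plan is to proceed by induction on the dimension $n$ of $X$, using the skeletal filtration and the long exact sequences of pairs to reduce the statement to a cellwise vanishing.

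For the base case $n=0$, the complex $X$ equals its zero-skeleton $X_0 = \bigsqcup_i G/H_i$, with each $H_i$ finite. Here the restriction $K_G^0(X) \to K_G^0(X_0)$ is the identity, so $\idealGX{G}{X}$ coincides with the kernel of the forgetful map $K_G^0(X_0) \to K^0(X_0)$; the claim for one-fold products reduces to analyzing this kernel directly from the definition of the augmentation ideal.

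For the inductive step, assume the result holds for proper $G$-CW complexes of dimension at most $n-1$, and let $a_1, \ldots, a_{n+1} \in \idealGX{G}{X}$. The inclusion $X_0 \subseteq X_{n-1}$ makes the restriction $K_G^0(X) \to K_G^0(X_{n-1})$ compatible with augmentation ideals, so each $a_i|_{X_{n-1}}$ lies in $\idealGX{G}{X_{n-1}}$. Applying the inductive hypothesis to $X_{n-1}$, the product $a_1|_{X_{n-1}} \cdots a_n|_{X_{n-1}}$ vanishes, and the long exact sequence of the pair $(X, X_{n-1})$ shows that $a_1 \cdots a_n$ lifts to a class in the relative group
$$K_G^0(X, X_{n-1}) \cong \bigoplus_i \tilde K_G^0\bigl(S^n \wedge (G/H_i)_+\bigr) \cong \bigoplus_i K_G^{-n}(G/H_i),$$
where the direct sum is indexed by the $n$-cells of $X$ with stabilizers $H_i$. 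Multiplying by $a_{n+1}$ uses the $K_G^0(X)$-module structure on the relative group, which factors through restriction to each orbit: the class $a_{n+1}|_{G/H_i}$ lies in the augmentation ideal of $R(H_i) \cong K_G^0(G/H_i)$, and the cup product $a_1 \cdots a_{n+1}$ is forced into $K_G^0(X, X_n) = 0$, since $\dim X = n$.

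The main obstacle will be rigorously establishing the final step: that multiplication by an augmentation-ideal class raises the skeletal filtration degree by one on classes already supported in the relative group $K_G^0(X, X_{n-1})$. This is the multiplicative core of the argument and mirrors the classical Atiyah-Segal filtration step for compact Lie groups. In the setting of proper actions of discrete groups, one verifies it cellwise: for each $n$-cell $D^n \times G/H_i$, the restriction $a_{n+1}|_{G/H_i}$ is trivialized on the zero-section of the cell by the augmentation condition, and chasing the resulting nullhomotopy through the cup product diagram produces the required lift into the next stage of the skeletal filtration, making the induction close.
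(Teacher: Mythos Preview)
Your base case does not go through, and in fact cannot: for $X = G/H$ with $H$ a nontrivial finite subgroup, the augmentation ideal $\idealGX{G}{X}$ is the kernel of the dimension map $R(H) \to \IZ$, i.e.\ the classical augmentation ideal $I(H)$, which is nonzero. So a single nonzero element of $\idealGX{G}{X}$ already refutes the $n=0$ case of the proposition as literally stated. Your sentence ``the claim for one-fold products reduces to analyzing this kernel directly'' stops exactly where the argument would have to confront this.

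The same issue undermines your inductive step. The mechanism you invoke --- that multiplication by a class in $\idealGX{G}{X}$ raises the skeletal filtration degree by one --- is the multiplicativity of the Atiyah--Hirzebruch filtration, but that filtration step is $F^1 = \ker\bigl(K_G^0(X) \to K_G^0(X_0)\bigr)$, not $\idealGX{G}{X} = \ker\bigl(K_G^0(X) \to K^0_{\{e\}}(X_0)\bigr)$. For proper non-free actions the latter is strictly larger: it contains the $I(H_i)$ on each orbit, and an element of $I(H_i) \subset R(H_i)$ is not null on the cell $G/H_i$, it merely has virtual dimension zero. So the ``nullhomotopy'' you propose to chase in the last paragraph does not exist, and multiplication by $a_{n+1}$ need not push $a_1\cdots a_n$ into $K_G^0(X,X_n)$.

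The paper gives no argument of its own here: it simply cites L\"uck--Oliver, Lemma~4.2, which is a statement about \emph{free} $G$-CW complexes. In that setting every isotropy group is trivial, each $I(H_i)=0$, hence $\idealGX{G}{X}=F^1$, and then your skeletal induction works verbatim (with a trivially true base case). This free case is also all the paper actually uses: the proposition is applied only to the free complex $EG^{n-1}$ when defining the completion pro-homomorphism. So either ``proper'' in the statement is a slip for ``free'', or the intended assertion is narrower than what is written; in any event you are attempting to prove a statement that is false as stated.
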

\begin{proof}
This  is  proved  in Lemma 4.2 in   \cite{lueckoliverbundles}. 
\end{proof}

We  fix  now  our  notations  concerning  pro-modules  and  pro-homomorphisms.

Let  $R$ be  a ring. A  pro-module  indexed  by  the  integers  is  an inverse
 system of $R$-modules.
$$ M_{0}\overset{\alpha_1}{\leftarrow}  M_{1}\overset{\alpha_2}{\leftarrow}
 M_{2}\overset{\alpha_3}{\leftarrow} M_{3}, \ldots$$

 We  write $\alpha_{n}^{m}= \alpha_{m+1}\circ\ldots \circ \alpha_{n}:
 M_{n}\to M_{m}$  for  $n>m$  and  put $\alpha_{n}^ {n}={ \rm
 id}_{M_{n}}$.

 A \emph{strict}  pro-homomorphism $\{ M_{n},
 \alpha_{n}\}\to \{ N_{n}, \beta_{n}\}$  consists  of a  collection of
 homomorphisms $\{f_{n}: M_{n}\to N_{n}\} $  such that $\beta_{n}\circ
 f_{n}= f_{n-1}\circ\alpha_{n}$ holds  for  each $n\geq 2$. 
A  pro $R$-module $\{M_{n}, \alpha_{n}\}$ is  called  pro-trivial  if
 for  each $m\geq1$ there  is  some  $n\geq m$ such that $\alpha_{n}^
 {m}=0$. A  strict  homomorphism $f$ as  above  is called a pro
 isomorphism if  $\ker(f)$ and $\rm{coker}(f)$ are  both pro-trivial. A
 sequence  of  strict  homomorphisms  
$$\{M_{n}, \alpha_{n}\} \overset{\{f_{n}\}}{\to} \{M^ {'}_{n}, \alpha^
 {'}_{n}\} \overset{\{g_{n}\}} {\to} \{M^ {''}_{n}, \alpha^ {''}_{n}\}
 $$
 is  called pro-exact if $g_{n}\circ f_{n}=0$  holds for $n\geq 1$ and
 the pro-R-module $\{ \ker(g_{n}) /  { \rm im}(f_{n}) \}$ is
 pro-trivial. The  following  lemmas  are proved in
 \cite{atiyahmacdonald}, Chapter 10, section 2, see  also \cite{lueckoliverbundles}:  

\begin{lemma}\label{lemmalim}
Let $0\to \{ M^ {'}, \alpha^{'}_{n} \} \to \{ M_{n} , \alpha_{n} \} \to 
\{ M^ {''}_{n}, \alpha^{''}_{n}\}\to 0$ be a pro-exact  sequence  of
pro-$R$-modules.  Then,  there  is  a natural exact sequence 

\begin{multline*}
$$
 0\to {\rm invlim} M^{'}_{n} \overset{{\rm
    invlim}f_{n}}{\longrightarrow} {\rm invlim} M_{n} \overset{{\rm
    invlim}g_{n}}{\longrightarrow} {\rm invlim}
     M^{''}_{n}\overset{\delta}{\to} \\ {\rm invlim}^ {1} M^{'}_{n}
    \overset{{\rm    invlim}^ {1}f_{n}}{\longrightarrow} {\rm invlim}^
    {1} M_{n}  \overset{{\rm invlim}^ {1}g_{n}}{\longrightarrow} {\rm
    invlim}^ {1}     M^{''}_{n}
$$
\end{multline*} 
In particular, a pro-isomorphism $\{f_{n}\}: \{M_{n}, \alpha_{n}\}\to
\{ N_{n}, \beta_{n}\}$ induces  isomorphisms

$$
 {\rm invlim}_{n\geq 1} f_{n}: {\rm invlim}_{n\geq 1}M_n
  \overset{\cong}{\to} {\rm invlim}_{n\geq 1} N_{n}$$$$ {\rm
  invlim}^{1}_{n\geq 1} f_{n}: {\rm invlim}^ {1}_{n\geq 1}M_n
  \overset{\cong}{\to} {\rm invlim}^ {1}_{n\geq 1} N_{n}
$$    

\end{lemma}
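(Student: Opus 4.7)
The plan is to reduce the lemma to the classical description of $\mathrm{invlim}$ and $\mathrm{invlim}^1$ as the kernel and cokernel of a single natural endomorphism of an infinite product, and then to deduce both the six-term exact sequence and the invariance under pro-isomorphism from this description together with the vanishing of $\mathrm{invlim}$ and $\mathrm{invlim}^1$ on pro-trivial systems.

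First I would set up the explicit formula. For an inverse system $\{M_n,\alpha_n\}$, consider the $R$-homomorphism
\[
\Phi_M \colon \prod_{n\geq 1} M_n \longrightarrow \prod_{n\geq 1} M_n, \qquad (x_n) \longmapsto \bigl(x_n - \alpha_{n+1}(x_{n+1})\bigr).
\]
By definition $\mathrm{invlim}\, M_n = \ker(\Phi_M)$ and $\mathrm{invlim}^1 M_n = \coker(\Phi_M)$. A \emph{strict} short exact sequence of pro-$R$-modules gives a short exact sequence of products $\prod M'_n \hookrightarrow \prod M_n \twoheadrightarrow \prod M''_n$ compatible with the maps $\Phi$, and I would obtain the six-term exact sequence in that case by applying the snake lemma to the resulting two-row ladder, reading off $\delta$ as the connecting morphism.

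Next I would establish the key vanishing: if $\{K_n,\gamma_n\}$ is pro-trivial, then both $\mathrm{invlim}\, K_n$ and $\mathrm{invlim}^1 K_n$ vanish. The first statement is immediate from the fact that any compatible sequence pushes down through some $\gamma_n^m = 0$. For the second, given $(y_n) \in \prod K_n$, use pro-triviality to choose for each $n$ an index $\phi(n)>n$ with $\gamma^n_{\phi(n)} = 0$, and define $x_n$ as the finite telescoping sum $\sum_{k=0}^{\phi(n)-n-1} \gamma^n_{n+k}(y_{n+k})$; a direct computation then shows $\Phi_K((x_n)) = (y_n)$, forcing $\coker(\Phi_K)=0$. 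With this vanishing in hand, I would handle a genuinely pro-exact (not strictly exact) sequence by splicing it into the three strict short exact sequences involving $\im(f_n)$, $\ker(g_n)$ and $\im(g_n)$, applying the already-proved six-term sequence to each, and stitching the pieces together using the vanishing on the pro-trivial homology $\{\ker(g_n)/\im(f_n)\}$.

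The statement about pro-isomorphisms then follows formally: writing $f$ as the composition $\{M_n\} \to \{\im(f_n)\} \to \{N_n\}$ yields two short exact sequences whose remaining terms are the pro-trivial kernel and cokernel of $f$, and the long exact sequences combined with the vanishing force $\mathrm{invlim}(f_n)$ and $\mathrm{invlim}^1(f_n)$ to be isomorphisms. I expect the main technical obstacle to be the explicit construction of a preimage under $\Phi_K$ in the pro-trivial case, i.e.\ the vanishing of $\mathrm{invlim}^1$ on pro-trivial towers; once this and the product description are in place, everything else is formal bookkeeping with the snake lemma.
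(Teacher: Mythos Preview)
Your proposal is correct and follows the standard argument; the paper itself does not give a proof of this lemma at all, but simply cites Atiyah--Macdonald (Chapter~10, Section~2) and L\"uck--Oliver for it. What you have written is essentially a fleshed-out version of the argument one finds in those references: the Milnor description of $\mathrm{invlim}$ and $\mathrm{invlim}^1$ as kernel and cokernel of $\Phi_M$, the snake lemma for strict short exact sequences, the vanishing on pro-trivial towers, and the reduction of pro-exact to strictly exact via the intermediate systems $\{\im(f_n)\}$, $\{\ker(g_n)\}$, $\{\im(g_n)\}$.

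One small point worth tightening in your sketch of the vanishing of $\mathrm{invlim}^1$ on a pro-trivial tower: as written you choose for each $n$ an index $\phi(n)>n$ with $\gamma^n_{\phi(n)}=0$, but you do not require $\phi$ to be increasing. If $\phi(n+1)<\phi(n)$, the telescoping computation of $x_n-\gamma_{n+1}(x_{n+1})$ picks up the residual terms $\gamma^n_{\phi(n+1)}(y_{\phi(n+1)})+\cdots+\gamma^n_{\phi(n)-1}(y_{\phi(n)-1})$, which need not vanish. This is harmless in practice since $\gamma^n_m=0$ for all $m\geq\phi(n)$, so one may replace $\phi$ by any strictly increasing function dominating it; but you should say so explicitly. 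With that adjustment the argument is complete.
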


\begin{lemma}\label{lemmaexact}
Fix  any commutative noetherian ring  $R$ and any  ideal $I\subset
R$. Then, for  any exact sequence $M^ {'} \to M \to M^ {''}$ of
finitely  generated $R$-modules, the  sequence 
$$ \{ M^ {'}/I^{n} M^ {'}\} \to \{ M/I^{n} M \} \to \{ M^ {''}/I^{n} M^
{''}\}$$
of pro-$R$-modules is pro-exact.   
\end{lemma}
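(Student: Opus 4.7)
The plan is to deduce this from the Artin--Rees lemma, following the standard commutative-algebra route (e.g.\ \cite{atiyahmacdonald}). First I would note that reducing $f\colon M'\to M$ and $g\colon M\to M''$ modulo $I^n$ produces strict pro-homomorphisms, and that the composite is zero since $g\circ f=0$ passes to all quotients. So the only nontrivial content is to show that the pro-$R$-module
$$Q_n \;:=\; \ker(\bar g_n)\big/\im(\bar f_n) \;=\; g^{-1}(I^n M'')\big/\bigl(f(M')+I^n M\bigr)$$
is pro-trivial, where $\bar f_n,\bar g_n$ are the induced maps on the $n$-th quotients.

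Next I would invoke the Artin--Rees lemma, whose hypotheses (noetherian ring, finitely generated modules) are exactly the ones assumed. Applied to the finitely generated submodule $g(M)\subseteq M''$ with respect to the ideal $I$, it furnishes a constant $c\geq 0$ such that
$$g(M)\cap I^n M'' \;\subseteq\; I^{n-c}\,g(M) \qquad \text{for all } n\geq c.$$
This is the key input; everything that follows is a short diagram chase.

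With this in hand, fix $m\geq 1$ and take $n\geq m+c$. Given $y\in g^{-1}(I^n M'')$, the above inclusion produces some $z\in I^{n-c} M$ with $g(z)=g(y)$; then $y-z\in\ker(g)=f(M')$, placing $y$ in $f(M')+I^{n-c}M\subseteq f(M')+I^m M$. Hence the transition map $Q_n\to Q_m$ is zero, and $\{Q_n\}$ is pro-trivial, proving pro-exactness.

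The main obstacle I anticipate is not any calculation but selecting the correct instance of Artin--Rees. A naive application to the pair $(M,f(M'))$ would control $I^n M\cap f(M')$ but provide no grip on preimages under $g$; the right move is instead to apply it to $g(M)\subseteq M''$, which is what permits lifting elements of $I^n M''\cap g(M)$ back to $I^{n-c}M$. Once this choice is made, the pro-exactness drops out formally.
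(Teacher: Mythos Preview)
Your argument is correct and is exactly the standard Artin--Rees route that the paper invokes by citing \cite{atiyahmacdonald}, Chapter~10 (and \cite{lueckoliverbundles}); the paper gives no independent proof beyond that reference. Your identification of $Q_n$ and the application of Artin--Rees to $g(M)\subseteq M''$ spell out precisely the computation behind the cited result.
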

\begin{proof}See Lemma 4.1 in \cite{lueckoliverbundles}.
	\end{proof}

\begin{definition}[Completion Map]
Let  $X$ be  a  proper $G$-CW  complex and let $P$ be a projective unitary $G$-equivariant stable bundle. Let $$p:X\times EG\to X$$ be  the  projection  to the first  coordinate. 
 Let $$\lambda: X\to  \eub{G}$$ be the unique continuous $G$-map up to $G$-homotopy, 
 define $\bar{\varphi}_{\lambda,p}$ as the following composition
 
 \begin{align*}K_G^*(X,P)&\xrightarrow{p^*}K_G^*(X\times EG,p^*(P))\xrightarrow{q}K(X\times_GEG,q(p^*(P)))\\&\xrightarrow{\res}K^*((X\times_GEG)^{(n-1)},\res(q(p^*(P)))).\end{align*}
 Where $q$ is the quotient map and $\res$ denotes the restriction. For simplicity we denote $\res(q(p^*(P)))$ by $P_{n-1}$.
 
 Via the map $\lambda$ we have that $K_G(X,P)$ is a $K_G(\eub{G})$-module. Note that the restriction
 
 $${\bf I}^n_{G,\eub{G}}\cdot K_G(X,P)\subseteq K_G(X,P)\xrightarrow{\bar{\varphi}_{\lambda,p}} K((X\times_G EG)^{(n-1)},P_{n-1})$$
 is zero, since  its image is contained in ${\bf I}^n_{\{0\},(EG\times_GX)^{(n-1)}}\cdot K((X\times_G EG)^{(n-1)},P_{n-1})$ and this ideal is zero by Lemma 4.2 in \cite{lueckoliverbundles}.
 We define the Completion Map as the pro-homomorphism induced by $\bar{\varphi}_{\lambda,p}$,
 $$\varphi_{\lambda,p}: \bigg \{ K_G^*(X,P)/ {{\bf I}^n_{G,\eub{G}}} \cdot K_G^*(X,P)  \bigg \}\longrightarrow  \bigg \{K^*((X\times_GEG)^{(n-1)},P_{n-1})\bigg \} $$
\end{definition}

\begin{theorem}\label{theoremcompletion}
Let  $G$  be a  group  which  admits  a  finite  model  for $\eub{G}$, the  classifying  space  for  proper  actions. Let  $X$ be  a  finite, proper $G$-CW  complex and  let $P$ be a  projective  unitary stable  $G$-equivariant  bundle  over $X$. Then, the  pro-homomorphism 
$$\varphi_{\lambda,p}: \big \{ K_G^*(X,P)/ {\bf I}^n_{G,\eub{G}}\cdot K_G^*(X,P)  \big \}\longrightarrow  \big \{K^*((X\times_G EG)^{(n-1)},P_{n-1})\big \} $$
is  a  pro-isomorphism. In  particular, the  system $\big \{K^*((X\times_G EG)^{(n-1)},P_{n-1})\big \}$ satisfies  the  Mittag-Leffler  condition  and  the  $\lim^ 1$ term is  zero. 
\end{theorem}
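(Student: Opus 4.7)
The plan is to adapt the inductive Lück--Oliver strategy from \cite{lueckoliverbundles} to the twisted setting, leveraging the module structure developed in Section \ref{sectionmodule} and the noetherian conclusions of Proposition \ref{propositionnoetherianring} together with Lemma \ref{propositionnoetherianmodule}.

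First I would proceed by induction on the number of equivariant cells of $X$, using a pushout expressing $X$ as $X'\cup_{G/H\times S^{n-1}} G/H\times D^n$. The Mayer--Vietoris long exact sequences for the pair, applied in both twisted equivariant theories $K_G^*(-,P)$ and $K_G^*(-\times EG^{m-1},p^*P)$, feed into a commutative diagram of pro-$K_G^0(\eub{G})$-modules. Lemma \ref{propositionnoetherianmodule} ensures all entries are finitely generated over the noetherian ring $K_G^0(\eub{G})$, so by Lemma \ref{lemmaexact} the quotient systems $\{-/\idealGX{G}{\eub{G}}^n\}$ preserve pro-exactness. A five-lemma argument in the category of pro-modules, together with Lemma \ref{lemmalim}, then reduces the statement to the base case of orbits $X=G/H$ with $H$ finite.

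For the base case, Remark \ref{remarkcoefficients} identifies $K_G^*(G/H,P)$ with $R_{S^1}(\widetilde{H_P})$, the representations of the central extension $\widetilde{H_P}$ of $H$ on which $S^1$ acts by scalar multiplication. Under the equivalence $EG\times_G G/H\simeq BH$, the target system of $\varphi_{\lambda,p}$ becomes the inverse system computing twisted $K$-theory of the skeleta of $BH$ with twist induced by the image of $P$ under the restriction map. The twisted Atiyah--Segal completion theorem for finite groups, due to Lahtinen \cite{lahtinenatiyahsegal}, asserts precisely that this target is pro-isomorphic to $\{R_{S^1}(\widetilde{H_P})/J(H)^n\cdot R_{S^1}(\widetilde{H_P})\}$, where $J(H)$ denotes the classical augmentation ideal in $R(H)$.

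The main obstacle I anticipate is the comparison of the global augmentation ideal $\idealGX{G}{\eub{G}}\subset K_G^0(\eub{G})$ with the local augmentation ideals $J(H)\subset R(H)$ after restriction along the orbit inclusions $G/H\hookrightarrow \eub{G}$. To bridge this gap I would invoke the cofinality argument from Theorem 4.3 of \cite{lueckoliverbundles}: the restriction homomorphism clearly sends $\idealGX{G}{\eub{G}}$ into $J(H)$, while the converse inclusion of some power of $J(H)$ into the image of $\idealGX{G}{\eub{G}}$, modulo nilpotents, follows one prime at a time from the existence of the vector bundles $E$ of dimension prime to $p$ provided by the fourth item of Proposition \ref{propositionnoetherianring}, combined with an induction transfer and the fact that the kernel and cokernel of $K_G^0(\eub{G})\to \lim_{\OrGF{G}{\mathcal{FIN}}} R(?)$ are nilpotent. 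This cofinality makes the two filtrations on $R_{S^1}(\widetilde{H_P})$ mutually pro-isomorphic; chaining the resulting pro-isomorphism with Lahtinen's completes the base case. The Mittag--Leffler and $\lim^1=0$ statements follow immediately, because the source pro-system $\{K_G^*(X,P)/\idealGX{G}{\eub{G}}^n K_G^*(X,P)\}$ has surjective structural maps.
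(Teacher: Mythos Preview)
Your proposal is correct and follows essentially the same strategy as the paper: both arguments proceed by induction over the cells of $X$ via Mayer--Vietoris and the five-lemma for pro-modules (using Lemmas \ref{lemmaexact} and \ref{lemmalim} together with the noetherian results of Proposition \ref{propositionnoetherianring} and Lemma \ref{propositionnoetherianmodule}), reduce to the orbit case $X=G/H$, invoke Lahtinen's twisted completion theorem for the finite group $H$, and then compare the $\idealGX{G}{\eub{G}}$-adic and $J(H)$-adic filtrations on $R_{S^1}(\widetilde{H_P})$ by the prime-by-prime vector bundle argument from the fourth item of Proposition \ref{propositionnoetherianring}. The only cosmetic difference is that the paper writes out the Atiyah-style prime ideal computation in $R(H)$ explicitly, whereas you cite it as the cofinality argument of \cite[Theorem 4.3]{lueckoliverbundles}; your mention of the induction transfer and of the nilpotent kernel/cokernel of $K_G^0(\eub{G})\to\lim R(?)$ is not actually needed here, since the bundle $E$ already lives on $\eub{G}$ and one works directly with its restriction to the orbit.
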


\begin{proof}
Due  to  propositions  \ref{propositionnoetherianring} and \ref{propositionnoetherianmodule},  we  are  dealing  with  a  noetherian  ring $K_G^0(\eub{G})$  and  the  noetherian  modules $K_G^*(X,P)$ over it. Hence,  we  can use  lemmas \ref{lemmaexact} and \ref{lemmalim}, and  the  5-lemma  for  pro-modules  and  pro-homomorphisms to  prove the  result  by  induction  on the  dimension  of $X$ and  the  number  of  cells  in each  dimension. 

Assume  that $X= G/H$  for  a  finite  group $H$. Then, the  completion  map fits  in the  following  diagram
$$\xymatrix{ \bigg \{ K_G^*(G/H,P) / \idealGX{G}{\eub{G}}^n \bigg \} \ar[r]^{\varphi_{\lambda,p}} &   \bigg\{ K_G^*(G/H\times EG^{(n-1)}, p^*(P))\bigg \} \\ 
  \bigg \{ K_H^*(\pt, P \mid_{eH})/ J^n \bigg \} \ar[u]^{{\rm ind}_{H\to G}}_{\cong}\ar[d]&  \bigg \{ K_H^*(EH^{(n-1)}, p^*(P)\mid_{eH\times EG^{(n-1)}})\bigg \} \ar[u]^{{\rm ind}_{H\to G}}_{\cong}\ar[d]^{=} \\   \bigg \{ K_H^*(\pt, P \mid_{eH})/ \idealGX{H}{\pt}^n\bigg\} \ar[r] & \bigg \{ K_H^*(EH^{(n-1)}, p^*(P)\mid_{eH\times EG^{(n-1)}})\bigg \}  }.  $$ 

The higher  vertical maps  are induction  isomorphisms (see \cite[Sec. 4.3]{barcenasespinozajoachimuribe}), and   the   ideal $J$ is  generated  by  the  image  of $\idealGX{G}{\eub{G}}$  under  the  map  $(\ind_{H\to G})^{-1}\circ \lambda$.  The  lower horizontal  map  is  a pro-isomorphism  as a  consequence  of  the Atiyah-Segal  Completion Theorem  for  Twisted  Equivariant  $K$-theory  of finite groups, Theorem 1  in \cite{lahtinenatiyahsegal}, where  it  is  proved  even for  compact  Lie  groups. We  will  analyze  now  the  lower  vertical  map and  verify  that  it  is  a  pro-isomorphism of pro-modules. 
This amounts to prove  that $\idealGX{H}{\pt}/ J$ is  nilpotent. Since the  representation  ring  of  $H$, $R(H)$ is  noetherian,  this holds  if  every prime  ideal  which  contains $J$  also  contains $\idealGX{H}{\pt}$.
For  an  element $v\in R(H)$,  denote by  $\chi_v$  the character of $v$.
Let  $H$  be  a  finite  group. Let  $\zeta=e^{\frac{2\pi i}{|H|}}$.  Put  $A=\IZ[\zeta]$.

Recall   Lemma  6.4 in \cite{atiyahcharactersfinitegroups},  that  given a finite  group $H$, every  prime  ideal $\mathcal{P}$ of  the  representation  ring $R(H)$ is the restriction of a prime ideal in $R(H)\otimes A$, moreover any prime ideal in $R(H)\otimes A$ is the restriction of a prime ideal in the ring of all $A$-valued maps on $G$, denoted by $A^H$.

The prime ideals in $A^H$ are described as follows, if $s\in H$ and ${\bf p}$ is a prime ideal in $A$, then $$\{\psi\in A^H\mid \psi(s)\in{\bf p}\}$$
is a prime ideal of $A^H$ and every prime ideal of $A^H$ is of this form, then for any prime ideal $\mathcal{P}\subseteq R(H)$ there is a prime ideal ${\bf p}$ in $A$ and $s\in H$ such that$$\mathcal{P}=\{v\in R(H)\mid \chi_v(s)\in{\bf p} \}$$

 Let $\mathcal{P}$ be  a   prime  ideal  not containing ${\bf I}_{H,\pt}$, then $s\neq e$, set $p=\charac(A/{\bf p})$ (possibly $p=0$), by arguments contained in \cite{atiyahcharactersfinitegroups} we can suppose that $s$ has order prime to $p$.
 
 Let $\lambda:G/H\to \eub{G}$ be the unique $G$-map up to $G$-homotopy, let $x=\lambda(eH)$, then $G_x\supseteq H$, then every element  $\epsilon\in R(G_x)$ induces and element $\res(\epsilon)\in R(H)$.
 
 According  to part  (iv) of  Proposition \ref{propositionnoetherianring}, there  exists  a complex  $G$-vector  bundle $E$ over  $\eub{G}$  such  that $p$ is prime to ${\rm dim}_\mathbb{C} E$, and  the  character $\chi_{E\mid_{x}}(s) $. Let  $k={\rm dim}_\IC E$, and  $v=[\mathbb{C}^k]- \res([E \mid_{x}])\in R(H)$ then $[\IC^k\times\eub{G}]-[E]\in {\bf I}_{G,\eub{G}}$, then $v\in J$.  On the other hand, $\chi_v(s)=k$, but as $p$ is prime to $k$ it implies that $k\notin{\bf p}$, then $\mathcal{P}$ not contains $J$ and then ${\bf I}_{H,\pt}/J$ is nilpotent.

This  proves  that  the lower  horizontal  arrow  is  a  pro-isomorphism, the $\lim^1$ term is  zero, and the theorem  holds  for  0-dimensional  $G$-CW  complexes  $X$.
Assume  that  the  theorem  holds  for  all $n-1$-dimensional, finite proper $G$-CW  complexes. 
 Given a $k$-dimensional, finite, proper $G$-CW  complex, $X$ there exists  a  pushout 
 $$\xymatrix{\coprod_\alpha S^{k-1}\times G/H \ar[d] \ar[r]& \coprod_\alpha D^k\times G/H \ar[d] \\  Y \ar[r]& X}$$
 where  $Y$  is  a  $k-1$-dimensional, finite  proper  $G$-CW  complex. The  Mayer-Vietoris  sequence  for  twisted equivariant $K$-theory  gives   pro-homomorphisms

\begin{multline*}\ldots \bigg \{ K_G^*(X,P)/ {{\bf I}_{G,\eub{G}}}^n  \bigg \}   \longrightarrow \\  \bigg \{ K_G^*(Y,P)/ {{\bf I}_{G,\eub{G}}}^n   \bigg  \} \bigoplus \bigoplus_{\alpha} \bigg \{ K_G^*(D^k\times G/H ,P)/ {{\bf I}_{G,\eub{G}}}^n   \bigg \}  \longrightarrow \\  \bigoplus_{\alpha} \bigg \{ K_G^*(S^{k-1}\times G/H ,P)/ {{\bf I}_{G,\eub{G}}}^n   \bigg \} \longrightarrow \{ K_G^{*+1}(X,P)/ {{\bf I}_{G,\eub{G}}}^n  \bigg \} \ldots
\end{multline*}

By  induction,  the  completion  maps  for the  $n-1$-dimensional  $G$-CW  complexes  are  isomorphisms. By  the  5-lemma  for  pro-groups,  the   completion  map  for  $X$  is an  isomorphism.

\end{proof}
\begin{corollary}
Let  $G$  be  a  discrete  group  with a  finite  model  for  $\eub{G}$. Let $P$ be a projective unitary stable $G$-equivariant bundle over $\eub{G}$, with $[P]\in H^3(\eub{G} \times_G EG, \mathbb{Z})\cong H^3(BG, \mathbb{Z})$. Consider  $I={\bf I}_{G,\eub{G}}$ Then there is a pro-isomorphism, 
$$\varphi_{id,p}: \big \{ K_G^*(\eub{G},P)/ {\bf I}^n_{G,\eub{G}}\cdot K_G^*(\eub{G},P)  \big \}\longrightarrow  \big \{K^*(BG)^{(n-1)},P_{n-1})\big \}. $$

\end{corollary}

\begin{proof}
	
	Using the completion theorem with $X=\eub{G}$ we have that 
	
	$$\varphi_{id,p}: \big \{ K_G^*(\eub{G},P)/ {\bf I}^n_{G,\eub{G}}\cdot K_G^*(\eub{G},P)  \big \}\longrightarrow  \big \{K^*(\eub{G}\times_GEG)^{(n-1)},P_{n-1})\big \} $$ is a pro-isomorphism, but as $X\times_GEG$ is a model for $BG$, then the required map is a pro-isomorphism and result follows.
	\end{proof}

\section{The  cocompletion Theorem }\label{sectioncocompletion}

We  will  now  dualize  the  results concerning  the   completion  theorem  in order to  obtain a  result  relating twisted  K-homology  of  the  Borel  construction $X\times_G EG$  of  a  finite proper $G$-CW  complex $X$  to the K-theory  ring  of  the  classifying  space  of  proper  actions   $K_{G}^{*}(\eub{G})$.

Given  a CW  complex  $X$, and  a  projective unitary stable $G$-equivariant bundle $P$, the  twisted  K-homology  groups are  defined  in terms of  Kasparov  bivariant  groups involving   continuous trace  algebras. See  the definition  below. 

The  comparison of  different  versions  of  twisted,  (nonequivariant!) $K$- homology, including  the  analytical one, involving   continuous  trace  algebras  has  been  
performed  in \cite{baumcareywang}.

A  generalized  construction for  the  Equivariant  version  of Twisted  $K$-homology  has  been  done  in \cite{barcenascarrillovelasquez}. The  comparison  to   other  methods including  continuous  trace  algebras, as  well  as crossed  products  and  KK-Theory  has been done  using  consequences  of  analytical  Poincar\'e Duality  in \cite{barcenasgeometrickhomology}.

We  refer  the  reader  for  preliminaries  on Kasparov  KK-Theory and  its  relation  to $K$-homology  and  Brown-Douglas-Fillmore Theory of  extensions to \cite{blackadar}, Chapter VII.

Recall  that he  norm  topology  and  the  compactly  generated  topology  agree on  compact  operators. Hence,   there  is  a  conjugation  action  of  the  group $\UU(\HH)$  of  unitary  operators  in the compact open  topology.

\begin{definition}[Continuous  trace Algebras]
Let  $X$  be  a  CW  complex. Given a  principal projective  unitary  bundle $P:E\to X$  , the  continuous  trace  algebra associated  to  $P$  is  the  algebra $A_P$ of  continuous  sections  of  the  bundle 
$$\mathcal{K}\times_{P\UU(\HH)} E\to X.$$ 
\end{definition}

\begin{definition}[KK-picture of  twisted K-homology]
Let  $X$  be a  locally  compact  space and  $P$  be  a  $P\UU(\HH)$-principal bundle. The  twisted  equivariant  $K$-homology  groups associated  to the  projective  unitary   principal bundle  $P$   are  defined   as  the  KK-groups 
$$K_*(X, P)= KK_*( A_P, \mathbb{C})$$  
\end{definition}

Continuous  trace  algebras,  used  in the  operator  theoretical  definition  of  twisted $K$-theory  and  $K$-homology  belong   to the  Bootstrap  class  \cite{blackadarvonneumann} Proposition IV.1.4.16. Hence,  the  following  form  of the  Universal  Coefficient Theorem for  $KK$-Groups holds. It  was  proved  in  \cite{rosenbergschochet}, Theorem 1.17:

\begin{theorem}[Universal  coefficient  Theorem  for  Kasparov KK-Theory]\label{theoremrosenbergschochet}
Let  $A, B$  be  $C^*$-algebras in the bootstrap category. Then, there  is  an  exact  sequence 
\begin{align*}
0\to \Ext_\mathbb{Z}(KK_{*-1}(\IC,A),KK_{*}(\IC,B)) &\to KK_*(A, B) \to\\ &\Hom_\mathbb{Z}(KK_*(\IC,A), KK_*(\IC,B))\to 0.  \end{align*}

\end{theorem}

Specializing  to  the  algebras $A_P$  one  has: 

\begin{theorem}\label{theoremuct}

Let  $X$  be a  locally  compact  space and  $P$  be  a  $P\UU(\HH)$-principal bundle. Then,  there  is  an  exact  sequence 

$$0\to \Ext_\mathbb{Z}(K^{*-1}(X, P), \mathbb{Z})\to K_*(X,P)\to \Hom_\mathbb{Z}(K^{*}(X,P), \mathbb{Z})\to 0 $$

\end{theorem}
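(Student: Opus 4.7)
The plan is to derive Theorem \ref{theoremuct} as a direct specialization of Theorem \ref{theoremrosenbergschochet} to the pair $(A,B) = (A_P, \mathbb{C})$, so the proof reduces to (i) verifying the bootstrap hypothesis for the continuous trace algebra $A_P$ and (ii) identifying the four functors that appear with the geometrically defined objects.

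First I would check the bootstrap hypothesis. As already mentioned before the statement of Theorem \ref{theoremrosenbergschochet}, continuous trace algebras lie in Blackadar's bootstrap class (see Blackadar IV.1.4.16, page 334); equivalently, $A_P$ is $KK$-equivalent, via stabilization and the identification $A_P \otimes \mathcal{K} \cong C_0(X,\mathcal{K})$ up to local triviality, to an algebra built from $C_0(X)$ by Morita equivalence and extensions, hence in particular $A_P$ satisfies the hypotheses of Theorem \ref{theoremrosenbergschochet}. Thus the UCT
\[
0 \to \Ext_{\mathbb{Z}}(K^*(A_P), K^*(\mathbb{C})) \to KK_*(A_P, \mathbb{C}) \to \Hom_{\mathbb{Z}}(K^*(A_P), K^*(\mathbb{C})) \to 0
\]
applies.

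Next I would identify the terms. By the definition recorded just before the statement, $KK_*(A_P,\mathbb{C}) = K_*(X,P)$. Since $K^*(\mathbb{C}) = \mathbb{Z}$ concentrated in even degree with $K^1(\mathbb{C}) = 0$, the Ext and Hom groups reduce to $\Ext_{\mathbb{Z}}(-,\mathbb{Z})$ and $\Hom_{\mathbb{Z}}(-,\mathbb{Z})$ applied to the $K$-theory groups of $A_P$. It remains to identify $K^*(A_P)$ with the twisted equivariant $K$-theory $K^*(X,P)$ (here for the trivial group action), which is the content of the comparison results cited in \cite{baumcareywang} and \cite{barcenasgeometrickhomology}. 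Inserting these identifications yields
\[
0 \to \Ext_{\mathbb{Z}}(K^{*-1}(X,P), \mathbb{Z}) \to K_*(X,P) \to \Hom_{\mathbb{Z}}(K^*(X,P), \mathbb{Z}) \to 0,
\]
where the degree shift in the Ext term is the standard one arising in the UCT when the coefficient algebra has $K$-theory concentrated in even degree.

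The main obstacle I expect is purely bookkeeping: pinning down the degree conventions so that the shift $K^{*-1}$ comes out correctly. The Rosenberg-Schochet sequence as stated in Theorem \ref{theoremrosenbergschochet} is written without an explicit grading offset, but in the standard formulation one has an odd-degree shift in the Ext term because $KK_n(A,B)$ pairs $K$-theory in complementary degrees modulo $2$; I would therefore track the shift through the isomorphism $KK_n(A_P,\mathbb{C}) \cong \Hom(K^n(A_P),\mathbb{Z}) \oplus \Ext(K^{n-1}(A_P),\mathbb{Z})$ (non-naturally split) and verify that it reproduces exactly the indexing in the statement. Once the grading is checked, no further analytic input is needed beyond the already quoted bootstrap membership and the identification $K_*(A_P) \cong K^*(X,P)$.
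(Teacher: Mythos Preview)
Your proposal is correct and follows essentially the same approach as the paper: the paper derives Theorem~\ref{theoremuct} simply by noting that continuous trace algebras lie in the bootstrap class (citing Blackadar, Proposition IV.1.4.16) and then specializing Theorem~\ref{theoremrosenbergschochet} to $(A,B)=(A_P,\mathbb{C})$, with no further argument given. Your write-up is a more explicit unpacking of exactly this specialization, including the identification of $K^*(A_P)$ with $K^*(X,P)$ and the bookkeeping for the degree shift.
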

 
 We  will  prove  the  following  cocompletion  Theorem,  inspired  by  the   methods  and  results of  \cite{joachimlueck}.

\begin{theorem}\label{theoremcocompletion}
Let $G$  be  a  discrete  group. Assume  that  $G$  admits  a  finite  model  for  $\eub{G}$. Let $X$ be a   finite  $G$-CW complex  and   $P$ be a projective unitary stable $G$-equivariant bundle. Let  $\idealGX{G}{\eub{G}}$ be  the  augmentation ideal. Then,  there  exists a  short  exact  sequence  
\begin{multline*}0\to\colim_{n\geq1} \Ext^{1}_\mathbb{Z} (K^*_G(X,P)/\idealGX{G}{\eub{G}}^n\cdot K_G^*(\eub{G},P) , \mathbb{Z})   \to \\ K_*(X\times_G EG, p^*(P)) \to \colim_{n\geq 1} \Hom(K^*_G(X,P)/\idealGX{G}{\eub{G}}^n\cdot K_G^*(\eub{G},P) ,\IZ)\to0
\end{multline*}
\end{theorem}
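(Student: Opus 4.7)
The plan is to combine the completion Theorem \ref{theoremcompletion} with the Universal Coefficient Theorem \ref{theoremuct} and the standard Pontryagin-type duality between inverse and direct limits of finitely generated abelian groups. Throughout I read the rightmost term of the statement as $\colim_{n}\Hom_{\mathbb{Z}}(K^{*}_{G}(X,P)/\idealGX{G}{\eub{G}}^n K^{*}_{G}(X,P),\mathbb{Z})$, which is the Pontryagin dual of the inverse limit of the left hand side of the completion theorem.

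First I compute the $K$-theory appearing inside the UCT. Since $EG$ is a free contractible $G$-space, the diagonal $G$-action on $X\times EG$ is free, and the twist $p^{*}(P)$ descends to $X\times_{G}EG$, so that
$$K^{*}(X\times_{G}EG, p^{*}(P))\cong K^{*}_{G}(X\times EG, p^{*}(P)).$$
Writing $EG=\bigcup_{n} EG^{n-1}$ along the filtration used in Theorem \ref{theoremcompletion} and invoking the Milnor $\varprojlim^{1}$-sequence in twisted equivariant $K$-theory yields
$$0\to \varprojlim{}^{1}\, K^{*-1}_{G}(X\times EG^{n-1}, p^{*}(P))\to K^{*}_{G}(X\times EG, p^{*}(P))\to \varprojlim\, K^{*}_{G}(X\times EG^{n-1}, p^{*}(P))\to 0.$$
Theorem \ref{theoremcompletion} identifies this tower, up to pro-isomorphism, with $\{K^{*}_{G}(X,P)/\idealGX{G}{\eub{G}}^n K^{*}_{G}(X,P)\}$, and simultaneously kills the $\varprojlim^{1}$-term, so
$$K^{*}(X\times_{G}EG, p^{*}(P))\cong \varprojlim_{n} K^{*}_{G}(X,P)/\idealGX{G}{\eub{G}}^n K^{*}_{G}(X,P).$$

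Next, applying the UCT of Theorem \ref{theoremuct} to the continuous trace algebra $A_{p^{*}(P)}$ produces
$$0\to \Ext^{1}_{\mathbb{Z}}\bigl(K^{*-1}(X\times_{G}EG, p^{*}(P)),\mathbb{Z}\bigr)\to K_{*}(X\times_{G}EG, p^{*}(P))\to \Hom_{\mathbb{Z}}\bigl(K^{*}(X\times_{G}EG, p^{*}(P)),\mathbb{Z}\bigr)\to 0.$$
By Proposition \ref{propositionnoetherianring} and Lemma \ref{propositionnoetherianmodule}, each $M_{n}:=K^{*}_{G}(X,P)/\idealGX{G}{\eub{G}}^n K^{*}_{G}(X,P)$ is a finitely generated abelian group, and the transition maps $M_{n+1}\twoheadrightarrow M_{n}$ are surjective by construction. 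For any such tower of finitely generated abelian groups, Jensen's classical lim-colim duality gives natural isomorphisms
$$\Hom_{\mathbb{Z}}(\varprojlim_{n} M_{n},\mathbb{Z})\cong \colim_{n}\Hom_{\mathbb{Z}}(M_{n},\mathbb{Z}),\qquad \Ext^{1}_{\mathbb{Z}}(\varprojlim_{n} M_{n},\mathbb{Z})\cong \colim_{n}\Ext^{1}_{\mathbb{Z}}(M_{n},\mathbb{Z}).$$
Substituting these into the UCT short exact sequence yields the stated sequence.

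The genuine technical work sits in the last step. The UCT naturally pairs $K_{*}$ with $K^{*}$ of the same space; turning the inverse limit produced by the completion theorem into a colimit on the outside requires (i) finite generation of each quotient $M_{n}$, supplied by the noetherian results of Section \ref{sectionmodule}, (ii) surjectivity of the transition maps of the tower, and (iii) the classical interchange of $\Hom$ and $\Ext^{1}$ with $\varprojlim$ for towers of finitely generated abelian groups. Once these are checked, the rest is formal assembly of the completion theorem, the Milnor sequence, and the UCT.
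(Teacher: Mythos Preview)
Your overall architecture is correct and matches the paper: feed the completion theorem into the Universal Coefficient Theorem and then convert the answer into a colimit over the filtration. The gap is in the very last step, the ``Jensen lim--colim duality''. The identity
\[
\Ext^{1}_{\mathbb{Z}}\bigl(\varprojlim_{n} M_{n},\,\mathbb{Z}\bigr)\;\cong\;\colim_{n}\Ext^{1}_{\mathbb{Z}}(M_{n},\mathbb{Z})
\]
is \emph{false} for towers of finitely generated abelian groups with surjective transition maps. Take $M_{n}=\mathbb{Z}/p^{n}$ with the canonical surjections; this is exactly the shape that can occur for $K^{*}_{G}(X,P)/\idealGX{G}{\eub{G}}^{n}$. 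Then $\varprojlim M_{n}=\mathbb{Z}_{p}$, and $\Ext^{1}_{\mathbb{Z}}(\mathbb{Z}_{p},\mathbb{Z})$ is uncountable (indeed $\mathbb{Z}_{p}/\mathbb{Z}$ is divisible with a $\mathbb{Q}$-vector-space summand of dimension~$\mathfrak{c}$, so already $\Ext^{1}(\mathbb{Z}_{p}/\mathbb{Z},\mathbb{Z})$ is enormous), whereas $\colim_{n}\Ext^{1}(\mathbb{Z}/p^{n},\mathbb{Z})\cong\mathbb{Z}(p^{\infty})$ is countable. So you cannot pass through the inverse limit and then dualize; the Ext side simply does not commute that way.

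The paper's proof avoids this by introducing a CW complex $Y$ of finite type together with a homotopy equivalence $Y\simeq X\times_{G}EG$ (this is the Lemma recalled from \cite{joachimlueck}). The point of $Y$ is that each skeleton $Y^{n}$ is a \emph{finite} complex, so one may apply the UCT of Theorem~\ref{theoremuct} to $Y^{n}$ level by level, obtaining a directed system of short exact sequences
\[
0\to \Ext^{1}_{\mathbb{Z}}\bigl(K^{*-1}(Y^{n},p^{*}P|_{Y^{n}}),\mathbb{Z}\bigr)\to K_{*}(Y^{n},p^{*}P|_{Y^{n}})\to \Hom_{\mathbb{Z}}\bigl(K^{*}(Y^{n},p^{*}P|_{Y^{n}}),\mathbb{Z}\bigr)\to 0.
\]
Now take $\colim_{n}$: filtered colimits are exact, and $K$-homology of a CW complex is the colimit of the $K$-homology of its skeleta, so the middle term becomes $K_{*}(X\times_{G}EG,p^{*}P)$. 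The outer terms are already colimits of $\Hom$ and $\Ext$ of the pro-system $\{K^{*}(Y^{n})\}$. Finally, the paper's chain of pro-isomorphisms
\[
\{K^{*}(Y^{n})\}\;\longleftrightarrow\;\{K^{*}_{G}(X\times EG^{n-1},p^{*}P)\}\;\longleftrightarrow\;\{K^{*}_{G}(X,P)/\idealGX{G}{\eub{G}}^{n}\}
\]
from Theorem~\ref{theoremcompletion} lets you rewrite those outer colimits termwise, since a pro-isomorphism becomes an ind-isomorphism after applying $\Hom(-,\mathbb{Z})$ or $\Ext^{1}(-,\mathbb{Z})$ levelwise, and hence an isomorphism on $\colim$. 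In short: apply the UCT \emph{before} passing to the limit, not after; the finite-type model $Y$ is exactly the device that makes this possible.
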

\begin{proof}
Choose a  CW  complex $Y$  of  finite  type and  a  cellular  homotopy  equivalence $f:Y \to  X\times_G EG$. Let  $f^{(n)}: Y^{(n)}\to  (X\times_G EG)^{(n)}$  be  the  map  restricted  to  the  skeletons. The   pro-homomorphisms 
$$\bigg \{K^*((X\times_G EG)^{(n)}, f^*(p^*(P)))\bigg \} \longrightarrow \bigg \{ K^*(Y^{(n)}, p^*(P)\mid Y^{(n)})\bigg \}$$
are a pro-isomorphism  of  abelian  pro-groups because it is induced by a $G$-homotopy equivalence.
On the  other hand,  due  to  the  completion  theorem,  \ref{theoremcompletion},  there  are  pro-isomorphisms

$$\varphi_{\lambda,p}: \bigg \{ K_G^*(X,P)/ {{\bf I}^n_{G,\eub{G}}}\cdot K_G^*(X,P)  \bigg \}\longrightarrow  \bigg \{K^*((X\times_G EG)^{(n-1)},P_{n-1})\bigg \} $$

Using  \ref{theoremuct},  one  gets  the  exact  sequence  

\begin{align*}0\to \Ext_\mathbb{Z} &( K_{*-1}(Y, f^*(p^*(P))), \mathbb{Z}) \to\\& K^*(Y, f^*(p^*(P))) \to \Hom_\mathbb{Z}(K_*(Y, f^*(p^*(P))), \mathbb{Z})\to 0.\end{align*}  

Combining this  exact  sequence with the  pro-isomorphisms  given  previously,  one  has  the  exact  sequence (it is because $\colim_{n\geq1}$ is an exact functor)
\begin{multline*}0\to\colim_{n\geq1} \Ext^{1}_\mathbb{Z} (K^*_G(X,P)/\idealGX{G}{\eub{G}}^n\cdot K_G^*(\eub{G},P) , \mathbb{Z})   \to  K_*(X\times_G EG, p^*(P)) \to \\\colim_{n\geq 1}\Hom_\IZ( K^*_G(X,P)/\idealGX{G}{\eub{G}}^n\cdot K_G^*(\eub{G},P) ,\IZ) \to0
\end{multline*}

\end{proof}

\bibliographystyle{abbrv}
\bibliography{atiyahsegal}

\end{document}